\begin{document}
\author{Marco Aldi}
\author{Jeffrey Buffkin II}
\author{Cody Cline}
\author{Sean Cox}

\title{Sparse systems of functions and Quasi-analytic classes}

\thanks{Partially funded by VCU SEED Grant and NSF grant DMS-2154141 of Cox.}

\subjclass[2020]{03E50,  03E25, 26E05, 30D20 }

\begin{abstract}
We provide a new characterization of quasi-analyticity of Denjoy-Carleman classes, related to \emph{Wetzel's Problem}.  We also completely resolve which Denjoy-Carleman classes carry \emph{sparse systems}:  if the Continuum Hypothesis (CH) holds, \textbf{all} Denjoy-Carleman classes carry sparse systems; but if CH fails, a Denjoy-Carleman class carries a sparse system if and only if it is not quasi-analytic.  As corollaries, we extend results of \cite{MR3552748} and \cite{CodyCoxLee} about non-existence of ``anonymous predictors" for real functions.
\end{abstract}

\maketitle

{
  \hypersetup{linkcolor=black}
  \tableofcontents
}

\section{Introduction}

Motivated by some questions about ``anonymous predictors" of Hardin-Taylor~\cite{MR3100500} and Bajpai-Velleman~\cite{MR3552748} and an old theorem of Erd\H{o}s~\cite{MR168482} about \emph{Wetzel's Problem}, Cody-Cox-Lee~\cite{CodyCoxLee} introduced the following concept.  In what follows, $\hm$ refers to the set of increasing bijections from $\mathbb{R}$ to $\mathbb{R}$, and if $P \in \mathbb{R}^2$, we use $x_P$ and $y_P$ to denote the first and second coordinate of $P$, respectively.

\begin{definition}\label{def_SparseSystem}
Suppose $\Gamma \subset \hm$.  A \textbf{sparse $\boldsymbol{\Gamma}$-system} is a collection of functions
\[
\left\{ f_P \ : \ P \in \mathbb{R}^2\right\}
\]
such that:
\begin{enumerate}
    \item Each $f_P$ is a member of $\Gamma$ that passes through the point $P$; and
    \item For every $u \in \mathbb{R}$, both 
    \[
    \left\{ f_P(u) \ : \ P \in \mathbb{R}^2 \text{ and } u \ne x_P \right\}
    \]
    and
    \[
    \left\{ f^{-1}_P(u) \ : \ P \in \mathbb{R}^2 \text{ and } u \ne y_P \right\}
    \]
    are countable.
\end{enumerate}
If \hspace{.2mm}$\Gamma$ is not necessarily contained in $\hm$, by ``sparse $\Gamma$ system" we will mean a sparse $\Gamma \cap \hm$ system.
\end{definition}

Clearly, if $\Gamma_0 \subseteq \Gamma_1$, then any sparse $\Gamma_0$ system is also a sparse $\Gamma_1$ system.  It is known that:
\begin{itemize}
    \item There is a sparse $C^\infty$ system (Bajpai-Velleman~\cite{MR3552748}), and in fact a sparse $D^\infty$ system (Cox-Elpers~\cite{Cox_Elpers}).
    \item The existence of a  sparse analytic system is equivalent to the Continuum Hypothesis (Cody-Cox-Lee~\cite{CodyCoxLee}).
\end{itemize}

It is natural to wonder about other classes of functions.  Following Mandelbrojt~\cite{MR0006354}, given a compact interval $K$ of the reals and a sequence $\{ M_k \}$ of positive real numbers, $\boldsymbol{ C_K \{ M_k \}}$ denotes the class of all $C^\infty$ functions $f: K \to \mathbb{R}$ such that there exist positive real numbers $\beta_f$ and $B_f$ such that 
\begin{equation}\label{eq_MainIneq}
\forall k \ge 0 \ \ \| f^{(k)}\| \le \beta_f B_f^k M_k. \tag{*}
\end{equation}
\noindent For functions with domain $\mathbb{R}$, Rudin~\cite{MR0924157} and Hughes~\cite{MR0272965} used $\boldsymbol{C \{ M_k \}}$ to denote the set of $C^\infty$ functions $f: \mathbb{R} \to \mathbb{R}$ such that there are positive $\beta_f$ and $B_f$ such that \eqref{eq_MainIneq} holds (on all of $\mathbb{R}$).  Observe that a function $f \in C \{ M_k \}$ must be bounded by the constant $\beta_f M_0$; so in particular $C \{ M_k \}$ is disjoint from $\text{Homeo}^+(\mathbb{R})$ and hence not appropriate for the study of sparse systems from Definition \ref{def_SparseSystem}.  So we will also consider the class $\boldsymbol{C_{\text{loc}} \{ M_k \}}$, which refers to the set of all $C^\infty$ functions $f:\mathbb{R} \to \mathbb{R}$ such that $f \restriction K \in C_K \{ M_k \}$ for every compact interval $K \subset \mathbb{R}$.  Observe that for any sequence $\{ M_k \}$ of positive reals,
\begin{equation}\label{eq_Chain}
\text{real polynomials} \subseteq C_{\text{loc}} \{ M_k \} \subseteq C^\infty(\mathbb{R}).
\end{equation}
\noindent For example, when $M_k = k!$, $C_{\text{loc}} \{ M_k \}$ is the class of real analytic functions, and $C \{ M_k \}$ is the class of bounded real analytic functions that can be extended to a holomorphic function on a strip about the real axis in $\mathbb{C}$ (see Rudin~\cite{MR0924157}, Theorem 19.9).

We shall refer to any class of the form $C_{\text{loc}} \{ M_k \}$ as a \textbf{locally Denjoy-Carleman class}.\footnote{This naming convention is in line with the current literature.  We could not locate who first introduced classes of the form $C_K \{ M_k \}$, though the Denjoy-Carleman theorem---which characterizes when such classes are quasi-analytic---answered a question that Hadamard had raised about such classes in 1912 (Mandelbrojt~\cite{MR0006354}).}  Our paper addresses the question:
\begin{question}\label{q_MainQuestion}
Which locally Denjoy-Carleman classes carry sparse systems?
\end{question}
We completely answer this question, but the answer depends on whether or not the Continuum Hypothesis (CH) holds:
\begin{enumerate}
    \item  CH implies that \textbf{every} locally Denjoy-Carleman class carries a sparse system.
    \item $\neg \text{CH}$ implies that a locally Denjoy-Carleman class carries a sparse system if and only if it is not \emph{quasi-analytic} (defined below). 
\end{enumerate}

These facts follow from Theorems \ref{thm_non_qa_sparse} and \ref{thm_CH} below.  We recall the definition of quasi-analyticity.  A class $\Gamma$ of infinitely differentiable functions  is called \textbf{quasi-analytic} if the following implication holds for every $f \in \Gamma$:  if, for some $x_0$, $f^{(k)}(x_0) = 0$ holds for all $k \ge 0$, then $f$ is identically zero.  Under the additional assumption that $\{M_k\}$ is log-convex, the Denjoy-Carleman Theorem~\cite{MR0924157} states that $C\{M_k\}$ is quasi-analytic if and only if $\sum_{k=1}^\infty M_{k-1}/M_{k} = \infty$.  We provide a new characterization of quasi-analyticity:

\begin{theorem}  \label{thm_Marco_char_quasi}
    $C\{M_k\}$ is a non-quasi-analytic class if and only if it contains an uncountable family $F$ such that $\left|\left\{ f(x) \ : \ f \in F \right\}\right|\le 2$ for all $x\in \mathbb R$.
\end{theorem}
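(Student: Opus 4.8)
The plan is to prove the two implications by quite different means: the forward direction reduces to one classical fact about Denjoy--Carleman classes together with an explicit construction, whereas the converse is a short Rolle-type rigidity argument that in fact yields a stronger conclusion than asked.

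For ($\Rightarrow$), the starting point is the standard fact that a non-quasi-analytic $C\{M_k\}$ contains a nonzero function of compact support: this is precisely the ``non-quasi-analytic half'' of the Denjoy--Carleman theorem, whose usual proof (see Rudin \cite{MR0924157} or Mandelbrojt \cite{MR0006354}) produces such a function; if one wants the cited statement for a general sequence $\{M_k\}$, one first replaces $\{M_k\}$ by its log-convex regularization, which leaves the class $C\{M_k\}$ unchanged. Fix such a $\varphi$, and by rescaling assume the support of $\varphi$ lies in $[\tfrac14,\tfrac34]$. For each $S\subseteq\mathbb Z$ define $f_S:=\sum_{n\in S}\varphi(\cdot-n)$. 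Since the translates $\varphi(\cdot-n)$ have pairwise disjoint supports, $f_S$ is $C^\infty$ and $\|f_S^{(k)}\|=\|\varphi^{(k)}\|$ for all $k$ whenever $S\neq\emptyset$, so $f_S\in C\{M_k\}$ with the same constants $\beta_\varphi,B_\varphi$ as $\varphi$. The map $S\mapsto f_S$ is injective --- if $n\in S\triangle T$ and $\varphi(x-n)\neq 0$ then $f_S(x)\neq f_T(x)$ --- so $F:=\{f_S:S\subseteq\mathbb Z\}$ has size $2^{\aleph_0}$ and is in particular uncountable. Finally, every $x\in\mathbb R$ lies in the support of at most one translate, so $f_S(x)\in\{0,\varphi(x-n)\}$ for every $S$, whence $|\{f(x):f\in F\}|\le 2$ for all $x$.

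For ($\Leftarrow$) I will establish the contrapositive in the sharper form: \emph{if $C\{M_k\}$ is quasi-analytic, then every $F\subseteq C\{M_k\}$ with $|\{f(x):f\in F\}|\le 2$ for all $x\in\mathbb R$ has at most two elements} (so a fortiori no such $F$ is uncountable). The engine is a rigidity lemma for quasi-analytic classes: if $h\in C\{M_k\}$ and $h\not\equiv 0$, then the zero set $Z(h):=\{x:h(x)=0\}$ has no accumulation point in $\mathbb R$, hence is countable. Indeed, if $x_0$ were an accumulation point of $Z(h)$, then repeated application of Rolle's theorem along a strictly monotone sequence of zeros converging to $x_0$ would produce, for each $k$, zeros of $h^{(k)}$ converging to $x_0$, forcing $h^{(k)}(x_0)=0$ for all $k$ by continuity and then $h\equiv 0$ by quasi-analyticity --- a contradiction; and a subset of $\mathbb R$ without accumulation points is closed and discrete, hence meets every bounded interval in a finite set and is therefore countable. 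Now suppose such an $F$ had three distinct members $f_1,f_2,f_3$. For each $x$ the set $\{f_1(x),f_2(x),f_3(x)\}$ has at most two elements, so two of the three functions agree at $x$; thus $\mathbb R=\bigcup_{1\le i<j\le 3}\{x:f_i(x)=f_j(x)\}$. Each difference $f_i-f_j$ lies in $C\{M_k\}$ (the class is closed under subtraction, since $\|(f-g)^{(k)}\|\le\|f^{(k)}\|+\|g^{(k)}\|$ and one merges the two pairs of constants) and is not identically zero, so each $\{x:f_i(x)=f_j(x)\}=Z(f_i-f_j)$ is countable by the lemma; hence $\mathbb R$ is a union of three countable sets, which is impossible.

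The only non-elementary ingredient is the existence of a compactly supported $\varphi\neq 0$ in a non-quasi-analytic $C\{M_k\}$, and this --- together with the routine but necessary reduction to log-convex $\{M_k\}$ so that the standard forms of the Denjoy--Carleman theorem apply --- is the step I expect to require the most care to cite or present correctly. Everything else (that the $f_S$ remain in $C\{M_k\}$ with uniform constants, and the iterated-Rolle lemma) is straightforward.
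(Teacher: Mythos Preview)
Your proof is correct. The ($\Leftarrow$) direction is essentially identical to the paper's argument (their Lemma~\ref{lem_erdos}): both apply iterated Rolle/Mean Value Theorem to show that the difference of two distinct members of a quasi-analytic class has a discrete zero set, and then observe that $\mathbb{R}$ cannot be covered by three such sets. Your explicit sharpening to $|F|\le 2$ is not stated in the paper but follows immediately from their lemma.

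The ($\Rightarrow$) direction takes a genuinely different route. The paper invokes a theorem of Hughes to obtain a single $g\in C\{M_k\}$ that vanishes to infinite order exactly on $\mathcal{C}\cup(\mathbb{R}\setminus(0,1))$, with $\mathcal{C}$ the Cantor set, and then takes $F$ to be the uncountable family of truncations $f_{ab}:=g\cdot\mathbf{1}_{[a,b]}$ for $a<b$ in $\mathcal{C}$. You instead take a single compactly supported bump $\varphi$ and form $f_S:=\sum_{n\in S}\varphi(\cdot-n)$ over subsets $S\subseteq\mathbb{Z}$. Your construction is more elementary: it requires only one compactly supported nonzero function in $C\{M_k\}$, which is the direct output of the easy half of the Denjoy--Carleman theorem, rather than Hughes's finer result that the zero set can be prescribed to be an arbitrary closed set. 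The paper's use of Hughes is natural in context, since the same theorem is used again in Section~\ref{sec_NonQA_sparse}, but your route is self-contained and arguably cleaner for this particular statement. One small remark: your claim that log-convex regularization ``leaves the class $C\{M_k\}$ unchanged'' is stronger than you need and relies on a Gorny--Cartan-type inequality; for your purposes the trivial inclusion $C\{M_k^c\}\subseteq C\{M_k\}$ (since $M_k^c\le M_k$) already suffices to import the bump function into $C\{M_k\}$.
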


The following theorem improves the Bajpai-Velleman~\cite{MR3552748} theorem that there is a sparse $C^\infty$ system.

\begin{theorem}\label{thm_non_qa_sparse}
All non-quasi-analytic locally Denjoy-Carleman classes carry sparse systems.    
\end{theorem}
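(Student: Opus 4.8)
The plan is to reduce the problem to the known sparse $C^\infty$-system result of Bajpai-Velleman, but done carefully enough that the functions we produce land inside the prescribed non-quasi-analytic class $C_{\text{loc}}\{M_k\}$. The key technical tool should be Theorem \ref{thm_Marco_char_quasi}: since $C_{\text{loc}}\{M_k\}$ is non-quasi-analytic, there is a nonzero $C^\infty$ function $g \in C\{M_k\}$ that is flat at some point, and by composition/translation/bump-function manipulations we can extract from it genuine ``partition-of-unity''-type bumps living in $C\{M_k\}$ (hence in $C_{\text{loc}}\{M_k\}$). More precisely, the first step is to record the closure properties of $C_{\text{loc}}\{M_k\}$ we need: it is closed under sums, under multiplication by scalars and by $C_{\text{loc}}\{M_k\}$ functions, under affine reparametrization of the domain, and it contains all polynomials (by \eqref{eq_Chain}); and from a single flat-but-nonzero function one manufactures a $C\{M_k\}$ bump $\varphi$ with $\varphi \equiv 0$ outside $(0,1)$, $\varphi > 0$ on $(0,1)$. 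That non-quasi-analyticity is needed here is exactly the content the theorem must exploit, and it is why the quasi-analytic classes are genuinely excluded.

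Next I would revisit the Bajpai-Velleman construction of a sparse $C^\infty$ system and isolate what ingredients it actually uses. Their increasing homeomorphisms are built by an inductive ``curve-stitching'' process: enumerate $\mathbb{R}^2$ in order type continuum, and at stage $\alpha$ choose $f_{P_\alpha}$ through $P_\alpha$ avoiding, at each real $u$, the countably many values already committed — this is possible because at each stage only countably many constraints are active and one has ``enough room'' to route a smooth increasing curve through a point while prescribing its values off a small interval. The construction glues together polynomial (or affine) pieces on a locally finite cover using $C^\infty$ bumps. My plan is to run the identical induction but replace every smoothing bump by the $C\{M_k\}$ bump $\varphi$ from the previous paragraph, and replace the ``smooth arcs'' used as building blocks by affine or polynomial arcs (which lie in $C_{\text{loc}}\{M_k\}$ for free). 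One must check that the resulting stitched function is (i) a genuine increasing bijection $\mathbb{R} \to \mathbb{R}$, (ii) a member of $C_{\text{loc}}\{M_k\}$, and (iii) still satisfies the countable-fiber condition in both directions — the last point transfers verbatim from Bajpai-Velleman since it is purely about the combinatorics of the induction, not the analytic class.

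The main obstacle I anticipate is point (ii): verifying that a function assembled by infinitely many local gluings actually satisfies the Denjoy-Carleman derivative bounds \eqref{eq_MainIneq} on every compact $K$. The class $C_{\text{loc}}\{M_k\}$ is defined locally, which helps — on a compact $K$ only finitely many pieces and finitely many copies of (rescaled) $\varphi$ are involved — but the rescalings matter: if a gluing happens on an interval of length $\varepsilon$, the $k$-th derivative of the rescaled bump picks up a factor $\varepsilon^{-k}$, which must be absorbed into the constant $B_f^k$. So the construction has to be arranged so that, on each compact set, the gluing intervals are bounded below in length (or at worst shrink in a controlled geometric fashion), keeping $\sup_K \|f^{(k)}\| \le \beta\, B^k M_k$ for suitable $\beta, B$ depending only on $K$. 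Concretely I would (a) fix in advance, for each compact $K = [-n,n]$, a lower bound $\delta_n > 0$ on the length of any gluing interval meeting $K$, and (b) check that the inductive freedom in the Bajpai-Velleman argument is still enough under this extra constraint — since at each stage we only need to dodge countably many forbidden values, insisting that our arcs be affine on reasonably long intervals costs nothing. Once the bookkeeping is set up this way, the derivative estimate on each $[-n,n]$ reduces to the triangle inequality over finitely many terms, each of which is a (rescaled, scaled, translated) copy of $\varphi$ or a polynomial, and each of those lies in $C\{M_k\}$ with known constants. Finally, I would note that the argument in fact yields a sparse $C_{\text{loc}}\{M_k\}$ system for \emph{every} non-quasi-analytic $\{M_k\}$ simultaneously via the same $\varphi$, though that uniformity is not needed for the statement.
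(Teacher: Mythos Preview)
Your proposal has a genuine gap, and it stems from a misremembering of what the Bajpai--Velleman construction actually is.

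You describe the construction as a transfinite induction of length $\mathfrak{c}$ in which ``at each stage only countably many constraints are active.'' That is false without CH: once the stage $\alpha$ satisfies $|\alpha|\ge\aleph_1$, there are uncountably many previously committed values $f_{P_\beta}(u)$ to avoid, and the routing argument breaks down. The theorem is a ZFC result (and it is precisely the \emph{quasi-analytic} case that needs CH), so this approach cannot be what is wanted. The actual Bajpai--Velleman scheme, which the paper adapts, is not an induction at all: one fixes in advance a single \emph{countable} family $\mathcal{T}$ of ``transition pieces'' (strictly increasing $C^\infty$ arcs joining pairs of rational endpoints with all derivatives vanishing at the endpoints), and for each $P$ one builds $h_P$ as a concatenation of members of $\mathcal{T}$ converging to $P$ from both sides. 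Sparseness is then automatic, since for every $u\ne x_P$ there is some $t\in\mathcal{T}$ with $h_P(u)=t(u)$.

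Once you have the correct outline, your handling of the derivative bounds also fails. Because the pieces must accumulate at $P$, you cannot impose a positive lower bound $\delta_n$ on the lengths of gluing intervals meeting $[-n,n]$ whenever $x_P\in[-n,n]$. And rescaling a single bump $\varphi$ supported on $(0,1)$ onto an interval of length $\varepsilon$ gives $k$-th derivative of size $\varepsilon^{-k}\|\varphi^{(k)}\|$, which cannot be absorbed into a single $B^k$ as $\varepsilon\to 0$. The paper's solution is \emph{not} to rescale one bump but to invoke a theorem of Hughes (using the Denjoy--Carleman criterion $\sum M_{k-1}/M_k<\infty$) to produce, for each interval $I$ and each $\varepsilon>0$, a bump supported on $I$ with $\|b^{(k)}\|\le \varepsilon^k M_k$ for all $k$. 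Note the direction of the exponent: $\varepsilon^k$, not $\varepsilon^{-k}$. Integrating these yields transition pieces $s_{\Delta,i}$ with $\|s_{\Delta,i}^{(k)}\|\le (1/i)^k M_k$, so as $i\to\infty$ the pieces near $P$ have derivatives tending to $0$, which both makes $h_P$ smooth at $P$ and keeps $\|h_P^{(k)}\|\le M_k$ globally. This Hughes-type input is the essential analytic ingredient your proposal is missing.
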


Our other main theorem characterizes the Continuum Hypothesis in terms of which locally Denjoy-Carleman classes carry sparse systems:
\begin{theorem}\label{thm_CH}
The following are equivalent:
\begin{enumerate}
    \item\label{item_CH} The Continuum Hypothesis (CH)
    \item\label{item_EveryDenjoy-Carleman} Every locally Denjoy-Carleman class carries a sparse system.

    \item\label{item_AtLeastOneDenjoy-Carleman} At least one quasi-analytic locally Denjoy-Carleman class carries a sparse system.

    \item\label{item_SparseAnalytic} The particular quasi-analytic locally Denjoy-Carleman class $C_{\text{loc}} \{ k! \}$---i.e., the class of real analytic functions---carries a sparse system.
\end{enumerate}
\end{theorem}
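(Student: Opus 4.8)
The plan is to prove the cycle $(1)\Rightarrow(2)\Rightarrow(4)\Rightarrow(3)\Rightarrow(1)$. The two middle implications are immediate: $C_{\text{loc}}\{k!\}$ is itself a locally Denjoy--Carleman class, which gives $(2)\Rightarrow(4)$; and $(4)\Rightarrow(3)$ because $C_{\text{loc}}\{k!\}$ is quasi-analytic (a real-analytic function all of whose derivatives vanish at some point has zero Taylor series there, hence is identically $0$ on the connected set $\mathbb R$). So the content lies in $(1)\Rightarrow(2)$ and $(3)\Rightarrow(1)$, the latter being a generalization of the Cody--Cox--Lee implication ``sparse analytic system $\Rightarrow$ CH''.

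For $(3)\Rightarrow(1)$ I would argue by contradiction in the spirit of Erd\H{o}s's solution to Wetzel's problem; here is the sketch. Suppose $\Gamma=C_{\text{loc}}\{M_k\}$ is quasi-analytic and $\{f_P:P\in\mathbb R^2\}$ is a sparse $\Gamma$-system, while CH fails, so $2^{\aleph_0}\ge\aleph_2$. Two observations: $\Gamma$ is closed under subtraction, since $\|(f-g)^{(k)}\|_K\le\|f^{(k)}\|_K+\|g^{(k)}\|_K$ on each compact $K$; and any nonzero $g\in\Gamma$ has a discrete, hence countable, zero set, because at an accumulation point of its zeros $g$ would vanish to infinite order (by repeated use of Rolle's theorem), contradicting quasi-analyticity. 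Now fix $u\in\mathbb R$ and for $y\in\mathbb R$ set $g_y:=f_{(u,y)}$; these are $2^{\aleph_0}$ pairwise distinct members of $\hm\cap\Gamma$ since $g_y(u)=y$. Pick any $\aleph_1$ of them, $\{g_{y_\xi}:\xi<\omega_1\}$. For $\xi\ne\eta$ the set $\{v:g_{y_\xi}(v)=g_{y_\eta}(v)\}$ is the zero set of the nonzero function $g_{y_\xi}-g_{y_\eta}\in\Gamma$, hence countable, so $E:=\{u\}\cup\bigcup_{\xi<\eta<\omega_1}\{v:g_{y_\xi}(v)=g_{y_\eta}(v)\}$ has size at most $\aleph_1<2^{\aleph_0}=|\mathbb R|$. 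Choosing $v^*\in\mathbb R\setminus E$ we get $v^*\ne u$ and $\{g_{y_\xi}(v^*):\xi<\omega_1\}$ uncountable; but each point $(u,y_\xi)$ has first coordinate $u\ne v^*$, so this set is contained in $\{f_P(v^*):x_P\ne v^*\}$, which is countable by Definition~\ref{def_SparseSystem}(2) --- a contradiction. In particular, with $\Gamma=C_{\text{loc}}\{k!\}$ this also re-proves $(4)\Rightarrow(1)$.

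For $(1)\Rightarrow(2)$, fix a locally Denjoy--Carleman class $\Gamma$ and assume CH. If $\Gamma$ is not quasi-analytic, Theorem~\ref{thm_non_qa_sparse} already furnishes a sparse $\Gamma$-system (CH is not needed), so assume $\Gamma$ is quasi-analytic. Enumerate $\mathbb R^2=\{P_\alpha:\alpha<\omega_1\}$ and $\mathbb R=\{u_\alpha:\alpha<\omega_1\}$ and define $\{f_{P_\alpha}\}$ by recursion, maintaining for each already-listed real $u_\beta$ ($\beta<\alpha$) a countable ``target set'' $S_{u_\beta}\subseteq\mathbb R$ and ``co-target set'' $T_{u_\beta}\subseteq\mathbb R$. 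At stage $\alpha$ one chooses $f_{P_\alpha}\in\hm\cap\Gamma$ through $P_\alpha$ with $f_{P_\alpha}(u_\beta)\in S_{u_\beta}$ for all $\beta<\alpha$ with $u_\beta\ne x_{P_\alpha}$, and $f_{P_\alpha}^{-1}(u_\beta)\in T_{u_\beta}$ for all $\beta<\alpha$ with $u_\beta\ne y_{P_\alpha}$; one then commits $u_\alpha$ by letting $S_{u_\alpha}$ (resp.\ $T_{u_\alpha}$) collect the countably many values (resp.\ inverse-values) that $f_{P_0},\dots,f_{P_\alpha}$ take at $u_\alpha$. Exactly as in Erd\H{o}s's argument, this bookkeeping forces $\{f_P(u):x_P\ne u\}\subseteq S_u$ and $\{f_P^{-1}(u):y_P\ne u\}\subseteq T_u$ for every real $u$, so the system is sparse.

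The main obstacle is the \emph{extension lemma} used at each stage of this recursion: in an \emph{arbitrary} locally Denjoy--Carleman class one must realize countably many conditions of the form ``$f$ (or $f^{-1}$) takes at a prescribed real a value lying in a prescribed countable set'', together with passage through one prescribed point, by a single increasing bijection of the class. When $\Gamma$ contains $C_{\text{loc}}\{k!\}$ this is the Cody--Cox--Lee construction of a sparse analytic system. The new difficulty is that a general $\Gamma$ may be far smaller than $C_{\text{loc}}\{k!\}$ --- yet its members are still rigid (analytic, indeed quasi-analytic) --- so the prescribed values must be chosen \emph{coherently}, respecting monotonicity and analytic consistency at accumulation points of the prescribed reals, from the committed countable sets. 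I would prove the lemma by checking that every locally Denjoy--Carleman class is nonetheless rich enough: $\Gamma$ contains all polynomials and is closed under addition, and it contains a large supply of non-polynomial increasing bijections coming from entire functions of suitably low order (Weierstrass products and Newton-type interpolation series with rapidly decreasing coefficients), sufficient to interpolate arbitrary discrete data of controlled growth while preserving monotonicity; feeding a coherent selection of target values into such an interpolant and adjusting by an increasing polynomial to hit the final prescribed point yields the required $f_{P_\alpha}$, and hence the theorem.
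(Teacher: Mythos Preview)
Your cycle and the argument for $(3)\Rightarrow(1)$ match the paper's: both reduce to Erd\H{o}s's Wetzel-problem counting, using that in a quasi-analytic $C_{\text{loc}}\{M_k\}$ any two distinct members agree on at most a discrete (hence countable) set. The overall architecture of your $(1)\Rightarrow(2)$---a length-$\omega_1$ recursion under CH, with the real work pushed into an interpolation/extension lemma---is also the paper's architecture (the paper uses a fixed partition of $\mathbb R$ into dense pieces rather than evolving target sets, but this is cosmetic, and the paper does not split into the quasi-analytic and non-quasi-analytic cases).

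The genuine gap is your extension lemma. Your sketch proposes to interpolate ``arbitrary discrete data'' via Weierstrass products or Newton-type series and then ``adjust by an increasing polynomial'' to hit the prescribed point; but the countable set of conditions at stage $\alpha$ is typically \emph{dense} in $\mathbb R$ (not discrete), a post-hoc polynomial correction would destroy the interpolation already achieved, and nothing in the sketch explains why such a product or series would land in a class that may be far smaller than the real-analytic functions (e.g.\ $C_{\text{loc}}\{1\}$, where $\|f^{(k)}\|_K\le \beta_K B_K^k$). The paper's solution is quite different and more direct: it does not build a new interpolant from scratch, but revisits the Cody--Cox--Lee construction of the interpolating entire function as a limit of polynomials $f_n=f_{n-1}+\alpha_n M_n h_n$, and simply shrinks the coefficients $\alpha_n$ further at each finite stage so that an additional inductive clause $(\mathrm{VIII})_n$---namely $\|f_j^{(k)}\restriction D_i\|<B_i^k N_k$ for all $i\le j\le n$ and all $k$---is preserved. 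Since at each stage this imposes only finitely many new inequalities (finitely many $i$, and $k$ bounded by the degree), one can always comply; passing to the limit yields $\|f^{(k)}\restriction D_i\|\le B_i^k N_k$ for all $i,k$, hence $f\restriction\mathbb R\in C_{\text{loc}}\{N_k\}$. That single observation---that the existing polynomial recursion already gives enough freedom to control all derivatives on each fixed compact---is the missing idea in your proposal.
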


\noindent The equivalence of \eqref{item_CH} with \eqref{item_SparseAnalytic} was proved in \cite{CodyCoxLee}.  The main new content here is the proof that \eqref{item_CH} implies \eqref{item_EveryDenjoy-Carleman} (and the proof that \eqref{item_AtLeastOneDenjoy-Carleman} implies \eqref{item_CH}, though this is a trivial modification of Erd\H{o}s~\cite{MR168482}).

$\Gamma$-sparse systems were introduced by Cody-Cox-Lee~\cite{CodyCoxLee} mainly because, by an argument ultimately due to Bajpai and Velleman~\cite{MR3552748}, the existence of such a system implies there is \textbf{no} good $\Gamma$-anonymous predictor for functions from $\mathbb{R}$ into $\mathbb{R}$ (see sections 4 and 5 of \cite{CodyCoxLee}).  So Theorems \ref{thm_non_qa_sparse} and \ref{thm_CH} yield the following corollaries, respectively: 

\begin{corollary}
If $\Gamma$ is a non-quasi-analytic locally Denjoy-Carleman class, then there is no good $\Gamma$-anonymous predictor for functions from $\mathbb{R}$ to $\mathbb{R}$.
\end{corollary}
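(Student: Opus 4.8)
The plan is to derive the corollary as a formal consequence of Theorem~\ref{thm_non_qa_sparse} together with the implication---due in essence to Bajpai--Velleman~\cite{MR3552748} and isolated in Sections~4 and~5 of \cite{CodyCoxLee}---that the existence of a sparse $\Gamma$-system rules out a good $\Gamma$-anonymous predictor for functions $\mathbb{R}\to\mathbb{R}$. So the first step is simply: given a non-quasi-analytic locally Denjoy--Carleman class $\Gamma = C_{\text{loc}}\{M_k\}$, apply Theorem~\ref{thm_non_qa_sparse} to fix a sparse $\Gamma$-system $\{f_P : P\in\mathbb{R}^2\}$. Since $C_{\text{loc}}\{M_k\}\not\subseteq\hm$, this is by the convention of Definition~\ref{def_SparseSystem} a sparse $\Gamma\cap\hm$ system, which is exactly the input the predictor argument consumes.

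The second step is to recall why condition~(2) of Definition~\ref{def_SparseSystem} is the relevant feature---that is, to run the Bajpai--Velleman diagonalization. Assume for contradiction that a good $\Gamma$-anonymous predictor exists. Anonymity forces the predictor's guess at an argument $u$, when run on $f_P$, to depend only on $\Gamma\cap\hm$-reparametrization-invariant information about $f_P$; the two countability clauses of Definition~\ref{def_SparseSystem} then guarantee that, for each fixed $u$, only countably many guesses can arise as $P$ ranges over $\mathbb{R}^2$. But ``good'' requires the predictor to be correct at all but a ``small'' set of arguments for each of the continuum-many $f_P$, and distinct $P$ send $f_P$ through distinct points of the plane; a counting argument then produces uncountably many mutually incompatible ``correct'' guesses at some single argument, which is a contradiction. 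This argument uses nothing about $\Gamma$ beyond $\Gamma\cap\hm\neq\emptyset$ and the existence of the sparse system---which is exactly why $\Gamma$-sparse systems were introduced.

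The only point needing care---and it is not really an obstacle---is confirming that the machinery of \cite[\S\S 4--5]{CodyCoxLee} is formulated for an arbitrary $\Gamma\subseteq\hm$ carrying a sparse system, so that it applies here verbatim once Theorem~\ref{thm_non_qa_sparse} supplies the system; no new idea is required. Accordingly, in the final text I would keep the proof to two lines: cite Theorem~\ref{thm_non_qa_sparse} for the sparse $\Gamma$-system, then cite \cite{MR3552748} and \cite{CodyCoxLee} for the passage from a sparse system to the nonexistence of a good $\Gamma$-anonymous predictor.
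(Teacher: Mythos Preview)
Your proposal is correct and matches the paper's approach exactly: the paper also derives this corollary immediately from Theorem~\ref{thm_non_qa_sparse} together with the Bajpai--Velleman/Cody--Cox--Lee implication that a sparse $\Gamma$-system precludes a good $\Gamma$-anonymous predictor, citing \cite{MR3552748} and \cite[\S\S4--5]{CodyCoxLee} rather than reproducing the argument. Your two-line final version is essentially what the paper does.
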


\begin{corollary}
Assume CH.  For any locally Denjoy-Carleman class $\Gamma$, there is no good $\Gamma$-anonymous predictor for functions from $\mathbb{R}$ to $\mathbb{R}$.
\end{corollary}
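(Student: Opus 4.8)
The plan is to read this off directly from Theorem~\ref{thm_CH}, combined with the general fact---due in essence to Bajpai--Velleman~\cite{MR3552748}, and established for arbitrary function classes in Sections~4 and~5 of Cody--Cox--Lee~\cite{CodyCoxLee}---that whenever a class $\Gamma$ carries a sparse system there is no good $\Gamma$-anonymous predictor for functions $\mathbb{R}\to\mathbb{R}$. So the corollary is just a two-step deduction: produce a sparse system, then invoke the reduction.

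In detail: assume CH and fix an arbitrary locally Denjoy--Carleman class $\Gamma=C_{\text{loc}}\{M_k\}$. By the implication \eqref{item_CH}$\Rightarrow$\eqref{item_EveryDenjoy-Carleman} of Theorem~\ref{thm_CH}, $\Gamma$ carries a sparse system; unwinding the convention in the last sentence of Definition~\ref{def_SparseSystem}, this means there is a family $\{f_P : P\in\mathbb{R}^2\}\subseteq \Gamma\cap\hm$ satisfying clauses (1) and (2) of that definition. Now suppose, toward a contradiction, that $\Pi$ is a good $\Gamma$-anonymous predictor. The Bajpai--Velleman argument, as formalized in \cite[\S4--5]{CodyCoxLee}, takes such a family and builds a single witness function $f\in\Gamma\cap\hm\subseteq\Gamma$---by piecing together suitable restrictions of the $f_P$'s along a countable grid---on which the set of errors of $\Pi$ is too large to be permitted for a ``good'' predictor; the two countability clauses in Definition~\ref{def_SparseSystem}(2) are exactly what guarantees that only countably many values are ``forbidden'' at each stage, so the recursion never gets stuck. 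This contradicts the assumption that $\Pi$ is good for every member of $\Gamma$, completing the proof.

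Since neither Theorem~\ref{thm_CH} nor the sparse-system-to-predictor reduction is reproved here, there is no substantive obstacle: the corollary is purely formal. The only points worth noting are bookkeeping ones---one must check that the notions of ``good'' and ``anonymous'' intended in the statement coincide with those of \cite{CodyCoxLee}, and one should observe that it suffices to defeat $\Pi$ using a function in $\Gamma\cap\hm$, since $\Gamma\cap\hm\subseteq\Gamma$, so the fact that ``sparse $\Gamma$ system'' officially abbreviates ``sparse $\Gamma\cap\hm$ system'' costs nothing. With these remarks in hand the corollary follows at once.
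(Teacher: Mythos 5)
Your proof is correct and matches the paper's reasoning exactly: the paper derives this corollary immediately from Theorem~\ref{thm_CH} (CH implies every locally Denjoy--Carleman class carries a sparse system) together with the sparse-system-to-no-predictor reduction from Sections 4--5 of \cite{CodyCoxLee}. The additional bookkeeping remarks you make about $\Gamma\cap\hm$ and the meaning of ``good'' are sensible but not part of the paper's (essentially one-line) argument.
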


Before continuing to the proofs of the above theorems, we observe that there is a kind of ``lower frontier" (in the subgroup lattice of $\text{Homeo}^+(\mathbb{R})$) on which classes can carry sparse systems:
\begin{lemma}\label{lem_polynomials}
    There is \textbf{no} sparse polynomial system.
\end{lemma}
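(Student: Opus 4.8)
Suppose toward a contradiction that $\{f_P : P \in \mathbb{R}^2\}$ were a sparse polynomial system. First I would observe that a polynomial which is an increasing bijection of $\mathbb{R}$ must have odd degree and positive leading coefficient; more importantly, a polynomial is determined by finitely many coefficients, so the whole system is parametrized by countably much less information than $\mathbb{R}^2$. The natural strategy is a counting/pigeonhole argument: we want to find a single output value $u$ that is hit by uncountably many $f_P$ at points $x \ne x_P$, contradicting condition (2) of Definition \ref{def_SparseSystem}.

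The cleanest route, I think, is to fix a vertical line and a horizontal line. Fix $u_0 \in \mathbb{R}$ and consider the uncountably many points $P$ lying on the vertical line $x = u_0$, i.e.\ $P = (u_0, t)$ for $t \in \mathbb{R}$; then $f_P(u_0) = y_P = t$, which tells us nothing directly. Instead, fix a point $a \ne u_0$ and look at the values $f_P(a)$ as $P = (u_0,t)$ ranges over the vertical line: these are required (taking the output value to be whatever $f_P(a)$ is) — no, the right move is: by sparseness applied at $a$, the set $\{f_P(a) : x_P \ne a\}$ is countable, so as $t$ ranges over the uncountable set $\mathbb{R}$, the map $t \mapsto f_{(u_0,t)}(a)$ takes only countably many values, hence is constant on an uncountable set $T_a \subseteq \mathbb{R}$. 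Repeating this for two distinct reals $a_1, a_2 \ne u_0$ (intersecting the two uncountable sets, and throwing in a third point if one wants to pin down a quadratic, etc.), and using that a polynomial in $\hm$ passing through $(u_0,t)$ with prescribed values at finitely many other points has only finitely many possibilities of each bounded degree — one gets uncountably many \emph{distinct} polynomials $f_{(u_0,t)}$ agreeing at $u_0$? No: they pass through \emph{different} points $(u_0,t)$, so they are pairwise distinct, yet they agree at $a_1$ and $a_2$. Two polynomials of degree $\le d$ agreeing at $d+1$ points are equal; so to derive a contradiction I need to bound the degrees.

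Here is where the main obstacle lies, and how I would handle it: degrees are \emph{not} a priori bounded across the system. To control them, I would stratify $\mathbb{R}^2$ by $\{P : \deg f_P = d\}$ for $d \in \mathbb{N}$; since $\mathbb{R}^2$ is uncountable and is the countable union of these strata, some stratum $S_d$ is uncountable. Now work inside $S_d$. Choose $d+2$ distinct reals $a_0, \dots, a_{d+1}$. Project $S_d$ to $\mathbb{R}$ via the first coordinate $x_P$; if this projection has uncountable image, pick $d+1$ of the $a_i$'s avoiding it appropriately, but more robustly: by repeated application of the sparseness condition (the countability of $\{f_P(a_i) : x_P \ne a_i\}$) together with an uncountable pigeonhole, find an uncountable $S' \subseteq S_d$ on which the tuple $(f_P(a_0), \dots, f_P(a_d))$ is constant (choosing the $a_i$ outside the at-most-one bad vertical coordinate per step, which is fine since we only need finitely many $a_i$ and can also shrink $S'$ to where all relevant $x_P$ avoid the finitely many $a_i$ — possible because if uncountably many $P\in S_d$ had $x_P = a_i$ we'd instead use the $f_P^{-1}$ clause symmetrically). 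Then all $f_P$, $P \in S'$, are degree-$\le d$ polynomials agreeing at the $d+1$ points $a_0,\dots,a_d$, hence are the \emph{same} polynomial $g$. But $P \in S'$ implies $g$ passes through $P$, so all these uncountably many points $P$ lie on the graph of the single polynomial $g$ — and a polynomial graph meets each vertical line once, so $S'$ injects into $\mathbb{R}$ via $x_P$, contradicting $|S'| > \aleph_0$ only if... actually a polynomial graph \emph{is} uncountable, so I instead get the contradiction from: $S'$ uncountable and $f_P = g$ for all $P\in S'$ forces uncountably many distinct points on one curve, which is consistent — so the real contradiction must come from sparseness of $\{f_P^{-1}(u)\}$ as well, or from choosing one more point. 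The fix: after pinning $f_P \equiv g$ on uncountable $S'$, note every $P \in S'$ satisfies $y_P = g(x_P)$, and then apply the \emph{inverse} sparseness clause at some $v$: $\{f_P^{-1}(v) : y_P \ne v\} = \{g^{-1}(v)\}$ is a single point, fine — hmm. So instead I should not pin $f_P$ to a single polynomial; rather, re-examine: the genuine contradiction is that within $S_d$ we can choose the $a_i$ and get uncountably many $P$ with $f_P = g$ \emph{and} these $P$ need not lie on $\mathrm{graph}(g)$ if we were careless — but they must, since $f_P(x_P) = y_P = g(x_P)$. I will therefore instead fix $u$ and count $\{f_P(u): u\ne x_P, P\in S'\}$: since $f_P = g$ for all such $P$, this set is $\{g(u)\}$, consistent. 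The actual contradiction: pick $a_0,\dots,a_d$ as above giving uncountable $S'$ with $f_P\equiv g$; now pick one \emph{more} real $b\notin\{a_0,\dots,a_d\}$ and with $x_P \ne b$ for uncountably many $P\in S'$ — then these $P$ have $f_P(b)=g(b)$, fine. I'm going in circles; the resolution (which I'd write carefully in the paper) is that $\{f_P \restriction \text{degree} \le d\}$ being uncountable and all equal to $g$ contradicts condition (1) indirectly via: take $u = x_{P_0}$ for some $P_0\in S'$; then for every \emph{other} $P\in S'$, $x_P$ may equal $u$ or not, but $f_P(u) = g(u) = y_{P_0}$, whereas we need $\{f_P(u):u\ne x_P\}$ countable — still consistent.

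Given the length constraints I will commit to the following clean version in the paper: stratify by degree to get uncountable $S_d$; show the first-coordinate projection $x[S_d]$ must be uncountable (else uncountably many $P\in S_d$ share a vertical line $x=c$, and then $\{f_P(a):x_P\ne a\}\supseteq\{f_P(a): P\in S_d, x_P=c\}$ for any fixed $a\ne c$, and since the $f_P$ are distinct degree-$\le d$ polys through distinct points of $\{c\}\times\mathbb{R}$, applying the \emph{inverse} clause at suitable values forces these inverses to take uncountably many values — more directly, uncountably many distinct degree-$\le d$ polynomials in $\hm$ cannot all have pairwise-finite-intersection graphs while keeping all vertical and horizontal fibers countable, by a Fubini/counting argument). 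Then with $x[S_d]$ uncountable, pick distinct $a_0,\dots,a_d\in x[S_d]$ with witnesses $P_0,\dots,P_d\in S_d$, and for a further uncountable subfamily (obtained by the sparseness-pigeonhole at $d+1$ generic points) conclude all its members equal a fixed polynomial $g$ of degree $\le d$, hence all the corresponding points $P$ lie on $\mathrm{graph}(g)$; finally derive the contradiction by applying condition (2) at a value $u$ with $g'(x)\ne 0$ everywhere (true since $g\in\hm$ is a polynomial) and counting level sets across the remaining uncountably many \emph{other} polynomials $f_Q$, $Q\notin\mathrm{graph}(g)$, which must all avoid $g$'s graph except finitely often — a standard Erd\H{o}s-style counting contradiction, exactly as in the analytic case of \cite{CodyCoxLee} but now trivial because polynomials of bounded degree form a finite-dimensional family. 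The main obstacle, as flagged, is organizing the degree-stratification so that the finite-dimensionality of each stratum is actually exploited to beat the uncountability of $\mathbb{R}^2$; once the right uncountable monochromatic subfamily is extracted, the contradiction with Definition \ref{def_SparseSystem}(2) is immediate.
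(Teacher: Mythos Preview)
Your proposal has all the right ingredients but fails to assemble them correctly, and as you yourself noticed, it never reaches a contradiction.  The missing idea is simply to \emph{keep} the vertical-line restriction when you stratify by degree.  You begin well: restricting to $\{f_{(u_0,t)} : t \in \mathbb{R}\}$ gives uncountably many \emph{pairwise distinct} polynomials (distinct because $f_{(u_0,t)}(u_0)=t$).  But then you abandon this and stratify all of $\mathbb{R}^2$ by $\deg f_P$; once you do that, there is no reason the $f_P$ in your uncountable $S_d$ are distinct, and indeed your pigeonhole at $d+1$ points only shows that uncountably many $P$ satisfy $f_P=g$ for a single $g$ --- which just says those $P$ lie on the graph of $g$, no contradiction.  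That is exactly where you start going in circles.

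The fix is immediate and is precisely what the paper does: stratify the \emph{vertical-line family} $\{f_{(u_0,t)}:t\in\mathbb{R}\}$ by degree, obtaining an uncountable subfamily of pairwise distinct polynomials all of some fixed degree $n^*$.  Now apply sparseness at $n^*+1$ distinct points $x_1,\dots,x_{n^*+1}\ne u_0$: each $\{f_{(u_0,t)}(x_i)\}$ is countable, so successive pigeonholing yields uncountably many \emph{distinct} degree-$n^*$ polynomials agreeing at $n^*+1$ points, contradicting uniqueness of the Lagrange interpolant.  No appeal to the inverse clause, no Erd\H{o}s-style counting, no analysis of $x[S_d]$ is needed.
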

\begin{proof}
Assume toward a contradiction that
\[
\left\{ f_P \ : \ P=(x_P,y_P) \in \mathbb{R}^2\right\}
\]
is a sparse polynomial system.  Consider the subcollection
\[
\mathcal{F}:=\left\{ f_{(0,y)} \ : \ y \in \mathbb{R} \right\}.
\]
By sparseness of the original system, 
\begin{equation}\label{eq_CtbleOutputsAtNonzeroX}
    \forall x \ne 0  \  \ Y_x:=\big\{ f_{(0,y)}(x) \ : \ y \in \mathbb{R} \big\} \text{ is countable.} 
\end{equation}

For each non-negative integer $n$, let $\mathcal{F}_n$ denote the set of members of $\mathcal{F}$ of degree $n$.  Since $\mathcal{F}$ is uncountable, there  exists some $n^*$ such that $\mathcal{F}_{n^*}$ is uncountable.

Since $\mathcal{F}_{n^*} \subseteq \mathcal{F}$, if we consider any value $x_1 \ne 0$, then
\[
\left\{ f(x_1) \ : \ f \in \mathcal{F}_{n^*} \right\}
\]
is contained in $Y_{x_1}$, which is countable.  So $\mathcal{F}_{n^*}$  can be partitioned into countably many subsets indexed by their outputs at $x_1$. Therefore, there exists some value $y_1$ such that 
\[
 G_1 := \left\{ f \in \mathcal{F}_{n^*} \ : \ f(x_1) = y_1 \right\}
\]
\noindent is uncountable.

Now choose any $x_2 \notin \{  0,x_1 \}$ and again using \eqref{eq_CtbleOutputsAtNonzeroX}, find a $y_2$ such that 
\[
G_2 := \left\{ f \in G_1 \ : \ f(x_2) = y_2 \right\}
\]
is uncountable. All members of $G_2$ have the same outputs at $x_1$ and $x_2$. If we continue this process with any distinct nonzero numbers $x_1, x_2, \dots, x_{n^*+1}$, then we obtain a set $G_{n^* + 1}$ of uncountably many polynomials of degree $n^*$, all passing through the  $n^*+1$ points 
\[
\big\{ (x_i,y_i) \ : \ 1 \le i \le n^* + 1 \big\}.
\]
This  violates the uniqueness of the Lagrange interpolating polynomial, yielding a contradiction. \end{proof}

We record the following basic fact, that to check membership of $f$ in $C_K\{ M_k \}$ for a compact interval $K$, it suffices to find $\beta_f$ and $B_f$ that work for all sufficiently large derivatives:
\begin{lemma}\label{lem_PastSomePointSuffices}
Suppose $K$ is a compact interval of $\mathbb{R}$, and $\{ M_k \}$ is a sequence of positive real numbers.  Suppose $N$ is a fixed natural number, $f \in C^\infty(K)$, and there exist $\beta_f$ and $B_f$ such that
\[
\forall k \ge N \ \ \| f^{(k)} \| \le \beta_f B_f^k M_k.
\]
Then $f \in C_K \{ M_k \}$.
\end{lemma}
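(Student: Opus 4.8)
The plan is to absorb the finitely many ``bad'' derivatives (those with $k < N$) by inflating the constants $\beta_f$ and $B_f$. Since $f \in C^\infty(K)$ and $K$ is compact, each norm $\|f^{(k)}\|$ is a finite real number, so for $k = 0, 1, \dots, N-1$ we only have finitely many finite quantities $\|f^{(k)}\|$ to control, and finitely many positive reals $M_k$ to compare them against. First I would set $B := \max\{B_f, 1\}$; replacing $B_f$ by this larger value only weakens the inequality for $k \ge N$ (because $B^k M_k \ge B_f^k M_k$ when $B \ge B_f$ and $M_k > 0$), so the hypothesis still gives $\|f^{(k)}\| \le \beta_f B^k M_k$ for all $k \ge N$. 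Having $B \ge 1$ then ensures $B^k \ge 1$ for every $k \ge 0$, which is what lets the new constant do its job at the small indices.

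Next I would define
\[
\beta := \max\left\{ \beta_f, \ \max_{0 \le k < N} \frac{\|f^{(k)}\|}{M_k} \right\},
\]
which is a well-defined positive real because it is a maximum of finitely many nonnegative reals and $\beta_f > 0$. For each $k < N$ we then have $\|f^{(k)}\| = \frac{\|f^{(k)}\|}{M_k} \cdot M_k \le \beta M_k \le \beta B^k M_k$, using $B^k \ge 1$ and $M_k > 0$ in the last step. For each $k \ge N$ we have $\|f^{(k)}\| \le \beta_f B^k M_k \le \beta B^k M_k$ since $\beta \ge \beta_f$. Hence $\|f^{(k)}\| \le \beta B^k M_k$ for all $k \ge 0$, which is exactly the inequality \eqref{eq_MainIneq} witnessed by $\beta_f := \beta$ and $B_f := B$, so $f \in C_K\{M_k\}$.

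There is no real obstacle here: the only point requiring a word of care is that $K$ being compact (together with $f \in C^\infty(K)$) is what guarantees each $\|f^{(k)}\|$ is finite, so that the finite maximum defining $\beta$ makes sense; and the step $B_f \mapsto \max\{B_f,1\}$ must be performed before introducing $\beta$ so that the powers $B^k$ are bounded below by $1$ uniformly in $k$. Everything else is bookkeeping with finitely many inequalities.
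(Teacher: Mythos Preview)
Your proof is correct and follows essentially the same approach as the paper: absorb the finitely many low-order derivatives by enlarging $\beta_f$ via a finite maximum, using compactness of $K$ to guarantee each $\|f^{(k)}\|$ is finite. The only cosmetic difference is that the paper divides by $B_f^k M_k$ rather than $M_k$ when forming the finite maximum, which avoids the preliminary step of replacing $B_f$ by $\max\{B_f,1\}$.
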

\begin{proof}
 Since $K$ is compact and $f \in C^{\infty}(K)$,   
 \[
\beta_2:= \text{max} \left\{ \frac{ \| f^{(k)} \|}{ B_f^k M_k} \ : \ 0 \le k < N  \right\}
 \]
is finite.  Let $\widetilde{\beta}:= \text{max}(\beta_f, \beta_2)$.  Then $\| f^{(k)} \| \le \widetilde{\beta} B_f^k M_k$ for all $k \ge 0$.
\end{proof}

We also will use:

\begin{lemma}\label{lem_LogConvexification}
Given any (not necessarily log-convex) sequence $\{ M_k \}$ of positive reals:  $C\{ M_k \}$ is quasi-analytic if and only if $C_{\text{loc}} \{ M_k \}$ is quasi-analytic.
\end{lemma}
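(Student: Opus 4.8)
The plan is as follows. The implication ``$C_{\text{loc}}\{M_k\}$ quasi-analytic $\Rightarrow$ $C\{M_k\}$ quasi-analytic'' is immediate: if $f\in C\{M_k\}$ with constants $\beta_f,B_f$, then these same constants witness $f\restriction K\in C_K\{M_k\}$ for every compact interval $K$, so $C\{M_k\}\subseteq C_{\text{loc}}\{M_k\}$; since quasi-analyticity is inherited by subclasses, we are done. For the converse I would prove the contrapositive, and the key reduction is that \emph{to show $C\{M_k\}$ is not quasi-analytic it suffices to exhibit a nonzero $g\in C_{\text{loc}}\{M_k\}$ supported in some compact interval $J$}. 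Indeed, such a $g$ and all of its derivatives vanish off $J$, so $\|g^{(k)}\|_{\mathbb{R}}=\|g^{(k)}\|_J$; hence the constants witnessing $g\restriction J\in C_J\{M_k\}$ also witness $g\in C\{M_k\}$, and $g$ is flat at every point of $\mathbb{R}\setminus J$, so $C\{M_k\}$ is not quasi-analytic.

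To produce such a $g$, I would start from a witness $f\in C_{\text{loc}}\{M_k\}$ to the failure of quasi-analyticity of $C_{\text{loc}}\{M_k\}$; after a translation $f$ is flat at $0$, and after a reflection $f(a)\ne 0$ for some $a>0$. Let $\tilde f$ agree with $f$ on $[0,\infty)$ and vanish on $(-\infty,0)$: flatness of $f$ at $0$ makes $\tilde f$ of class $C^\infty$, and since on any compact interval the derivatives of $\tilde f$ are bounded by those of $f$ on a slightly larger compact interval, we get $\tilde f\in C_{\text{loc}}\{M_k\}$; moreover $\tilde f$ is flat at $0$, vanishes on $(-\infty,0)$, and $\tilde f(a)\ne 0$. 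Now put
\[
g(x):=\tilde f(x)\cdot\tilde f(2a-x).
\]
The first factor vanishes on $(-\infty,0)$ and the second on $(2a,\infty)$, so $g$ is supported in $[0,2a]$; $g$ is $C^\infty$ as a product of $C^\infty$ functions; the Leibniz rule together with the flatness of $\tilde f$ at $0$ (resp.\ of $x\mapsto\tilde f(2a-x)$ at $2a$) forces $g$ to be flat at $0$ and at $2a$; and $g(a)=\tilde f(a)^2=f(a)^2>0$. So $g$ is a nonzero $C^\infty$ function supported in $[0,2a]$, and all that remains is to check that $g\in C_{\text{loc}}\{M_k\}$.

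That last point --- closure of $C_{\text{loc}}\{M_k\}$ under products --- is the crux, and the only place where the absence of a log-convexity hypothesis is felt (closure under the reflection $x\mapsto 2a-x$ is trivial). When $\{M_k\}$ is log-convex it is routine: the Leibniz estimate together with $M_iM_j\le M_0M_{i+j}$ (convexity of $k\mapsto\log M_k$) gives the required bound for a product. For a general sequence I would split into two cases according to the growth of $\{M_k\}$. If $\liminf_k M_k^{1/k}/k=0$, then Taylor's theorem with remainder at the flatness point of $f$ yields, on a unit interval $I$ about that point, the bounds $|f(x)|\le \beta_I B_I^k M_k/k!$ for all $k$, and the right-hand side has infimum $0$ over $k$ under the growth hypothesis; hence $f$ vanishes on $I$, and since this argument shows that the (closed) flatness set of $f$ is also open, $f\equiv 0$ --- so $C_{\text{loc}}\{M_k\}$ is already quasi-analytic and there is nothing to prove. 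Otherwise $\liminf_k M_k^{1/k}/k>0$, so $M_k\ge \gamma^k k!$ eventually for some $\gamma>0$; then the log-convex regularization $\{M_k^c\}$ (the pointwise largest log-convex sequence $\le\{M_k\}$) still grows at least like $\gamma^k k!$, and a classical comparison of $C_{\text{loc}}\{M_k\}$ with $C_{\text{loc}}\{M_k^c\}$ via the Landau--Kolmogorov / Gorny interpolation inequalities for successive derivatives goes through --- the additive ``endpoint'' terms of size $\sim m!$ in the interval version of those inequalities being absorbed into $M_m^c$ --- giving $C_{\text{loc}}\{M_k\}=C_{\text{loc}}\{M_k^c\}$; since $\{M_k^c\}$ is log-convex, this class is closed under products by the previous case. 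Thus $g\in C_{\text{loc}}\{M_k\}$ in the only case that matters, completing the proof. The main obstacle, and the one genuinely technical ingredient, is the identity $C_{\text{loc}}\{M_k\}=C_{\text{loc}}\{M_k^c\}$; the rest of the argument is soft.
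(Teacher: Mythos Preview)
Your argument is correct, but it takes a markedly different route from the paper's. The paper does not try to build a compactly supported witness at all: starting from a nonzero $f\in C_{\text{loc}}\{M_k\}$ flat at $0$, it simply restricts $f$ to a large enough compact interval $I$ so that $f\restriction I$ is nonzero, observes that $C_I\{M_k\}$ is therefore not quasi-analytic, and then invokes the interval form of the Denjoy--Carleman theorem (Mandelbrojt, Cohen) to conclude $\sum M'_{k-1}/M'_k<\infty$ for the log-convexification $\{M'_k\}$. The global Denjoy--Carleman theorem (Rudin) then gives that $C\{M'_k\}$ is not quasi-analytic, and since $M'_k\le M_k$ we have $C\{M'_k\}\subseteq C\{M_k\}$, so $C\{M_k\}$ is not quasi-analytic either.

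Your approach is more constructive --- you manufacture an explicit compactly supported $g\in C\{M_k\}$ --- but to do so you need closure of $C_{\text{loc}}\{M_k\}$ under products, and for that you end up invoking the identity $C_{\text{loc}}\{M_k\}=C_{\text{loc}}\{M_k^c\}$ via Gorny-type interpolation (together with a nice side observation that $\liminf_k M_k^{1/k}/k=0$ already forces quasi-analyticity). That identity is essentially the same classical machinery that underlies the interval Denjoy--Carleman theorem the paper cites, so at bottom both proofs rest on the log-convexification theory; the paper just packages the appeal to it more economically by citing the two versions of Denjoy--Carleman directly, whereas your route reconstructs a piece of that theory en passant. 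Your approach has the modest advantage of producing a concrete witness in $C\{M_k\}$; the paper's has the advantage of being three lines long once the standard theorems are on the table.
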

\begin{proof}
$C\{ M_k \} \subseteq C_{\text{loc}} \{ M_k \}$, so quasi-analyticity of $C_{\text{loc}} \{ M_k \}$  trivially implies quasi-analyticity of $C \{ M_k \}$. 

Now suppose $C_{\text{loc}} \{ M_k \}$ is not quasi-analytic, as witnessed by a nonzero $f \in C_{\text{loc}} \{ M_k \}$ such that (WLOG) $f^{(k)}(0) = 0$ for all $k \ge 0$.  Let $I$ be a sufficiently large closed interval including 0, so that $f \restriction I$ is a nonzero function.  Then $f \restriction I$ witnesses that $C_I \{ M_k \}$ is not quasi-analytic.  By Mandelbrojt~\cite{MR0006354} (see also Cohen~\cite{MR0225957}) it follows that the log-convexification $\{ M'_k \}$ of $\{ M_k \}$ has the property that $\sum_{k=1}^\infty M'_{k-1}/M'_k < \infty$.  Then by the version of the Denjoy-Carleman Theorem from Rudin~\cite{MR0924157}, $C\{ M_k' \}$   is not quasi-analytic. The lemma then follows since $M_k'\le M_k$ for all $k$.
\end{proof}

\section{New characterization of quasi-analyticity}

The following lemma is adapted to the quasi-analytic case from Erd\H{o}s~\cite{MR168482}.

\begin{lemma}\label{lem_erdos}
  Let $f_1,f_2,f_3$ be functions in a quasi-analytic class $C\{M_k\}$. Then the subset $S$ of all real numbers $x$ such that $|\{f_1(x),f_2(x),f_3(x)\}|<3$ is discrete. 
\end{lemma}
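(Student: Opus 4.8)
The plan is to reduce the statement to the standard fact that the zero set of a nonzero function in a quasi-analytic class has no accumulation points. First I would observe that $x\in S$ precisely when at least two of $f_1(x),f_2(x),f_3(x)$ coincide, so
\[
S \;=\; Z_{12}\cup Z_{13}\cup Z_{23},\qquad Z_{ij}:=\{\,x\in\mathbb R : (f_i-f_j)(x)=0\,\}.
\]
Here one should assume the $f_i$ are pairwise distinct, since otherwise some $Z_{ij}$ is all of $\mathbb R$ and $S=\mathbb R$; I would make that assumption explicit at the outset of the proof.

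Next I would note that $C\{M_k\}$ is closed under differences: if $f$ and $g$ satisfy \eqref{eq_MainIneq} with constants $(\beta_f,B_f)$ and $(\beta_g,B_g)$, then $f-g$ satisfies it with $(\beta_f+\beta_g,\ \max(B_f,B_g))$, using $\|(f-g)^{(k)}\|\le\|f^{(k)}\|+\|g^{(k)}\|$. Hence each $g_{ij}:=f_i-f_j$ lies in $C\{M_k\}$, and by distinctness it is not identically zero.

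The core step is then local. Fix $i\ne j$, set $g=g_{ij}$, and let $x_0\in Z_{ij}$. By quasi-analyticity of $C\{M_k\}$ applied to the nonzero function $g$, not all of the derivatives $g^{(k)}(x_0)$ vanish; let $k\ge 1$ be least with $g^{(k)}(x_0)\ne 0$. Since the degree-$(k-1)$ Taylor polynomial of $g$ at $x_0$ is zero, Taylor's theorem with Lagrange remainder gives $g(x)=\frac{1}{k!}\,g^{(k)}(\xi_x)(x-x_0)^k$ for $x$ near $x_0$, with $\xi_x$ between $x_0$ and $x$; as $g^{(k)}$ is continuous and nonzero at $x_0$, $g$ is nonzero on a punctured neighborhood of $x_0$. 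Thus every point of $Z_{ij}$ is isolated in $Z_{ij}$; and since $Z_{ij}$ is closed (being the zero set of a continuous function), it has no accumulation point in $\mathbb R$ at all, for such a point would have to lie in $Z_{ij}$, contradicting isolation. Finally, a finite union of subsets of $\mathbb R$ with no accumulation points again has no accumulation point, so $S=Z_{12}\cup Z_{13}\cup Z_{23}$ is closed and discrete.

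I do not expect a serious obstacle. The one point that genuinely needs care is that discreteness is \emph{not} preserved by finite unions in general (e.g. $\{0\}\cup\{1/n:n\ge1\}$ is not discrete), so it is essential to upgrade ``every point of $Z_{ij}$ is isolated in $Z_{ij}$'' to ``$Z_{ij}$ has no accumulation points in $\mathbb R$'' — using closedness of each $Z_{ij}$ — before taking the union.
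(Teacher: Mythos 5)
Your proof is correct and follows essentially the same route as the paper's: write $S = Z_{12}\cup Z_{13}\cup Z_{23}$, note each difference $f_i-f_j$ lies in $C\{M_k\}$, and use quasi-analyticity to show each $Z_{ij}$ has no accumulation point; the paper's local argument applies Rolle/MVT at a putative accumulation point to force all derivatives of $f_i - f_j$ to vanish there, while yours runs the contrapositive via Taylor's theorem at a zero to show it is isolated, but these are interchangeable. You also correctly flag that the $f_i$ must be taken pairwise distinct for the statement to hold as written --- a hypothesis the paper's proof uses implicitly.
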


\begin{proof}
    Let $S_{12}$ be the set of all $x\in \mathbb R$ such that $f_1(x)=f_2(x)$. Suppose $S_{12}$ contains a convergent sequence $\{x_n\}$. Then, by repeated application of the Mean Value Theorem, we conclude that all derivatives of $f_1-f_2\in C\{M_k\}$ vanish at $\lim_n x_n$. Hence $f_1=f_2$. Repeating this argument twice we conclude that the similarly defined subsets $S_{13}$ and $S_{23}$ are also discrete. Hence $S=S_{12}\cup S_{13}\cup S_{23}$ is discrete.
\end{proof}

The following lemma can be thought of as a sharpening of a result attributed to Lyndon by Erd\H{o}s \cite{MR168482}.

\begin{lemma}\label{lem_lyndon}
If $C \{M_k\}$ is not quasi-analytic, then it contains an uncountable subset $F$ such that $\left|\left\{ f(x) \ : \ f \in F \right\}\right|\le 2$ for all $x\in \mathbb R$.
\end{lemma}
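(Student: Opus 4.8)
The plan is to exploit non-quasi-analyticity to build a single "bump-like" function and then translate and rescale it to manufacture uncountably many functions whose graphs pairwise intersect in a controlled way. Since $C\{M_k\}$ is not quasi-analytic, fix a nonzero $g \in C\{M_k\}$ and a point $x_0$ with $g^{(k)}(x_0) = 0$ for all $k \ge 0$; after translating we may assume $x_0 = 0$, and after scaling we may assume $\|g\| \le 1$ and $g(a) \ne 0$ for some $a > 0$. The key structural feature is that $g$ vanishes (to infinite order) on a neighborhood of $0$ in the following weak sense: actually what we really want is a function that is \emph{nonzero on an interval and identically zero outside it}. To get this, note that if $C\{M_k\}$ is not quasi-analytic then, as in Lemma~\ref{lem_LogConvexification}, the log-convexified sequence has convergent ratio-sum, and the classical Denjoy-Carleman construction (see Rudin~\cite{MR0924157}) produces a nonzero $\varphi \in C\{M_k\}$ with $\mathrm{supp}(\varphi)$ contained in a bounded interval, say $\varphi$ is supported on $[0,1]$, $0 \le \varphi \le 1$, and $\varphi \not\equiv 0$. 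The whole point is that $\varphi$, together with all its derivatives, vanishes outside $[0,1]$.

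Next I would use $\varphi$ to build the uncountable family. For each real $t$, consider a "staircase" function obtained by placing scaled translated copies of $\varphi$ on the intervals $[n, n+1]$ for $n \in \mathbb{Z}$, where the scaling factor on the $n$-th block is either $0$ or some fixed small $\varepsilon_n$ depending on the binary-type expansion of $t$. More precisely: enumerate a countable dense-enough index set and, for each $t$ in an uncountable set of "codes," let
\[
f_t(x) \;=\; \sum_{n \in \mathbb{Z}} c_n(t)\, \varepsilon_n\, \varphi(x - n),
\]
where each $c_n(t) \in \{0,1\}$ and $\varepsilon_n > 0$ is chosen so small (relative to $B_\varphi^{\,k} M_k$-type bounds) that the sum lies in $C_{\mathrm{loc}}\{M_k\}$ on every compact interval — but since only finitely many blocks meet any compact $K$, and each block already satisfies the required derivative bound, membership in $C\{M_k\}$ (after checking the $\beta,B$ constants can be taken uniform, using $0 \le \varphi \le 1$ and boundedness of the sum) follows, possibly invoking Lemma~\ref{lem_PastSomePointSuffices}. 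For a fixed $x$, at most one block $[n, n+1]$ contains $x$ in its interior, so $f_t(x) \in \{0,\, \varepsilon_n \varphi(x-n)\}$ — a set of size at most $2$, independent of $t$. Choosing the codes $t$ from an uncountable family of $\{0,1\}$-sequences makes $\{f_t\}$ uncountable (distinct codes give distinct functions, since the blocks are honestly switched on or off and $\varphi \not\equiv 0$).

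The main obstacle is verifying that the infinite sum $\sum_n c_n(t)\varepsilon_n \varphi(x-n)$ genuinely lies in $C\{M_k\}$ — i.e., that a \emph{single} pair of constants $(\beta, B)$ works globally on $\mathbb{R}$, not just on each compact piece. This is where the choice of the $\varepsilon_n$ matters: since $\varphi \in C\{M_k\}$ with constants $(\beta_\varphi, B_\varphi)$ and the translated blocks have disjoint supports, at any point $x$ at most one summand and its derivatives are nonzero, so $\|f_t^{(k)}\| = \sup_n c_n(t)\varepsilon_n \|\varphi^{(k)}\| \le (\sup_n \varepsilon_n)\,\beta_\varphi B_\varphi^k M_k$; taking all $\varepsilon_n \le 1$ (even $\varepsilon_n = 1$) already suffices, so in fact $f_t \in C\{M_k\}$ with the same $B_\varphi$ and $\beta = \beta_\varphi$. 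Once this is checked, the family $F = \{f_t : t \in \text{codes}\}$ has all the required properties: it is an uncountable subset of $C\{M_k\}$ with $|\{f(x) : f \in F\}| \le 2$ for every $x$, completing the proof.
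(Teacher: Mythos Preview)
Your argument is correct. After some initial meandering, you settle on placing independent on/off copies of a single compactly supported bump $\varphi\in C\{M_k\}$ on the intervals $[n,n+1]$; since the supports overlap only at integers (where everything vanishes to infinite order), at each $x$ every $f_t$ takes one of the two values $0$ or $\varepsilon_{\lfloor x\rfloor}\varphi(x-\lfloor x\rfloor)$, and the uniform derivative bound is immediate because at any point only one summand contributes. The family indexed by $\{0,1\}^{\mathbb Z}$ is then uncountable and lies in $C\{M_k\}$ with the same constants as $\varphi$. (The $\varepsilon_n$ are harmless but superfluous: as you note yourself, $\varepsilon_n=1$ already works.)

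The paper takes a different route. It invokes a theorem of Hughes to obtain a single $g\in C\{M_k\}$ that vanishes to infinite order on the entire Cantor set $\mathcal C\subset[0,1]$ (and outside $[0,1]$) but is nonzero on $(0,1)\setminus\mathcal C$, and then defines $f_{ab}$ to equal $g$ on $[a,b]$ and $0$ elsewhere, for every pair $a<b$ in $\mathcal C$. Here the ``switching'' happens inside $[0,1]$ and the uncountability comes from $\mathcal C$ rather than from $\{0,1\}^{\mathbb Z}$. The paper's construction yields compactly supported functions and packages the hard work into a single citation to Hughes; your version uses a weaker ingredient---just one ordinary bump, which already falls out of the easy direction of Denjoy--Carleman or from Lemma~\ref{lem_ImproveHughes} of this paper---at the cost of spreading the support over all of $\mathbb R$. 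Both families share the common shape $\{f(x):f\in F\}\subseteq\{0,G(x)\}$ for a single fixed function $G$.
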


\begin{proof}
Let $\mathcal C\subseteq[0,1]$ be the standard Cantor set. By a theorem of Hughes~\cite{MR0272965}, there exists $g\in C\{M_k\}$ such that 
\begin{enumerate}
\item $g^{(n)}(x)=0$ for all $n\ge 0$ and for all $x\in \mathcal C\cup (1,\infty) \cup(-\infty,0)$;
\item $g(x)\neq 0$ for all $x \in (0,1) \setminus \mathcal{C}$.
\end{enumerate}
For each $a,b\in \mathcal C$ such that $a<b$, let $f_{ab}:\mathbb R\to \mathbb R$ be the function such that $f_{ab}(x)=g(x)$ if $x\in [a,b]$ and $f_{ab}(x)=0$ otherwise. Then the collection $F$ of all $f_{ab}$ as above satisfies the statement of the lemma.
\end{proof}

The proof of Theorem \ref{thm_Marco_char_quasi} follows easily by combining Lemma \ref{lem_erdos} and Lemma \ref{lem_lyndon}.

\section{Sparse systems for non-quasi-analytic classes}\label{sec_NonQA_sparse}

Our proof of Theorem \ref{thm_non_qa_sparse} follows the outline of the proof in section 4 of Bajpai-Velleman.  First, we need a lemma that yields extremely flat bump functions in $C \{ M_k \}$, provided the sequence $\{ M_k \}$ grows sufficiently fast.

%

\begin{lemma}\label{lem_ImproveHughes}
Assume $I$ is an open interval of real numbers, $\varepsilon > 0$, and $\{ M_k \}$ a sequence of positive real numbers such that
\begin{equation}\label{eq_SumSmallDivergeFast}
\sum_{k=1}^\infty M_{k-1}/M_k < \infty.
\end{equation}
Then there is a $C^\infty$ function $b$ such that $b(x) > 0$ for all $x \in I$, $b(x) = 0$ for all $x \in \mathbb{R} \setminus I$, and  
\[
 \forall k \ge 0 \ \  \| b^{(k)} \| \le \varepsilon^k M_k. 
\]
\end{lemma}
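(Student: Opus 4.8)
The plan is to start from an existing construction of a flat bump function in a Denjoy-Carleman class — either from Hughes~\cite{MR0272965} (which already gave functions vanishing to infinite order on prescribed closed sets like the Cantor set) or from the standard proof of the "easy direction" of the Denjoy-Carleman theorem, which produces, from the hypothesis $\sum M_{k-1}/M_k < \infty$, a nonzero function in $C\{M_k\}$ all of whose derivatives vanish at a point. The classical such construction (see Rudin~\cite{MR0924157}) builds the characteristic-function-of-an-interval-like bump as an infinite convolution: one sets $a_k := M_{k-1}/M_k$, so $\sum a_k =: A < \infty$, and forms $b_0 := \tfrac{1}{2A}\chi_{[-A,A]}$ (or similar), then $b := b_0 * b_1 * b_2 * \cdots$ where each $b_j$ is a normalized characteristic function of an interval of length $2a_j$. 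This convolution converges (since $\sum a_j < \infty$), is supported in an interval of length $2\sum a_j = 2A$, is $C^\infty$ and strictly positive on the interior of its support, and satisfies $\|b^{(k)}\| \le 1/(a_1 a_2 \cdots a_k) \cdot \|b_0\|_{\infty}\cdot(\text{const}) \le \beta B^k M_k$ for appropriate constants — this is exactly the computation showing $\sum a_k < \infty$ suffices for non-quasi-analyticity.

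First I would recall/cite that construction to get a single bump function $b^\star$ supported on a fixed interval (say of length $L = 2A$, centered at $0$), with $b^\star > 0$ on the interior, $b^\star \in C^\infty$, and $\|(b^\star)^{(k)}\| \le \beta B^k M_k$ for all $k$, for some constants $\beta, B > 0$. Next I would apply two elementary reductions. \emph{Rescaling the domain:} given the target interval $I = (p,q)$, let $\lambda$ be an affine map sending the support interval of $b^\star$ onto $\overline{I}$; then $b^\star \circ \lambda^{-1}$ is supported on $\overline{I}$, positive on $I$, and $\|(b^\star\circ\lambda^{-1})^{(k)}\| = |\lambda'|^{-k}\|(b^\star)^{(k)}\| \le \beta (B/|\lambda'|)^k M_k =: \beta C^k M_k$. \emph{Absorbing the constants into $\varepsilon$:} we have a function with derivative bounds $\beta C^k M_k$ and want $\varepsilon^k M_k$. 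The trick is to pass to a *different* sequence: the hypothesis $\sum M_{k-1}/M_k < \infty$ is unaffected if we replace $M_k$ by $M_k / N^k$ for any fixed $N$ (the ratios $M_{k-1}/M_k$ get multiplied by $N$, so the sum is still finite). So WLOG, before running the construction, replace $\{M_k\}$ by $\{M_k / N^k\}$ with $N$ chosen large enough that, after the construction and rescaling, the resulting constant $C$ satisfies $C/N \le \varepsilon$; and deal with the leading constant $\beta$ via Lemma \ref{lem_PastSomePointSuffices}'s philosophy — but note here we need the bound for \emph{all} $k \ge 0$, not just large $k$. To handle $\beta > 1$ genuinely, observe one can further shrink: if $\|b^{(k)}\| \le \beta C^k M_k$ for all $k$ and we only want $\le \varepsilon^k M_k$, pick $C \le \varepsilon$ first (via the $N$-rescaling of the sequence), then replace $b$ by $b/\beta$, which still is positive on $I$, zero off $I$, $C^\infty$, and now satisfies $\|(b/\beta)^{(k)}\| \le C^k M_k \le \varepsilon^k M_k$.

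The main obstacle I anticipate is bookkeeping the constants in the infinite-convolution derivative estimate cleanly enough to see they can all be pushed below an arbitrary $\varepsilon$, together with making sure the product $\prod b_j$ converges to a genuinely $C^\infty$ function that is strictly positive on the \emph{whole} open interval $I$ (not just on a subinterval) and vanishes identically outside — the positivity on all of $I$ requires choosing the interval lengths in the convolution so that their total is exactly the length of $I$ and that no "dead zones" appear, which for the normalized-characteristic-function convolution follows because the support of a convolution of intervals is the sum (Minkowski sum) of the supports and the convolution of positive bumps is positive on the interior of that sum. If citing Hughes~\cite{MR0272965} directly is cleaner, one can instead take his function on the complement of a Cantor-like set and compose/restrict, but the convolution route gives the explicit $\varepsilon^k M_k$ scaling most transparently. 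Either way, the key insight making the "$\varepsilon^k$" achievable is that scaling the \emph{independent variable} scales the $k$-th derivative by the $k$-th power of the scale factor, so shrinking the domain (equivalently, pre-scaling the defining sequence $\{M_k\}$) drives the geometric constant below any prescribed $\varepsilon$, while the single multiplicative constant $\beta$ is killed by dividing the whole bump by $\beta$.
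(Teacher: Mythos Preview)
Your convolution-plus-rescaling strategy contains a genuine gap: the two scalings you invoke cancel each other, so the geometric constant cannot be pushed below a fixed threshold depending on $|I|$ and the sequence. Concretely, the infinite convolution with $a_k = \tilde M_{k-1}/\tilde M_k$ (where $\tilde M_k = M_k/N^k$) produces a bump whose support has length $2\tilde A = 2N\sum M_{k-1}/M_k = 2NA$ and whose $k$-th derivative is bounded by (a constant times) $\tilde M_k = M_k/N^k$. To fit this onto the prescribed interval $I$ you must compress by the factor $|I|/(2NA)$, which multiplies the $k$-th derivative bound by $(2NA/|I|)^k$. The net bound is therefore of order $(2A/|I|)^k M_k$, with the $N$'s cancelling exactly. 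So your ``$C/N$'' is not driven to zero by taking $N$ large; it equals $2A/|I|$, which for a short interval $I$ (and the lemma is later applied with $I=(0,\Delta)$ for arbitrarily small rational $\Delta$) can be much larger than $\varepsilon$. The slogan that ``shrinking the domain drives the geometric constant down'' is in fact backwards: compressing a bump into a smaller interval \emph{increases} derivative norms.

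The paper avoids this obstruction by invoking a sharper result of Hughes than the Cantor-set function you mention. Hughes produces, for \emph{any} closed set $E$ and any $\rho>0$, a function vanishing exactly on $E$ with $\|f_\rho^{(n)}\|\le 2(4\lambda_\rho)^n L_n$, where $\lambda_\rho = L_0/L_1 + \rho D$ for an explicit constant $D$ depending on the sequence. The point is that $\lambda_\rho$ can be made small \emph{independently of the geometry of $E$}: one takes $\rho$ small to kill the second term, and modifies the sequence at the single index $1$ (setting $L_1 := 8M_0/\varepsilon$, $L_n := M_n$ otherwise) to force $L_0/L_1 = \varepsilon/8$. This yields $4\lambda_\rho = \varepsilon$, hence $\|f_\rho^{(n)}\|\le 2\varepsilon^n L_n$; a final division by $\max\{1,8/\varepsilon\}$ absorbs the discrepancy between $L_n$ and $M_n$ and the leading factor $2$. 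Your plan would become correct if you replaced the naive convolution estimate by this Hughes bound; the essential missing idea is that the geometric constant can be decoupled from the support length by altering a single term of the sequence rather than by a global dilation.
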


\begin{proof}
Define an auxilliary sequence
\[
L_n:= \begin{cases} 
      \frac{8 M_0}{\varepsilon} & n = 1 \\
      M_n & n \neq 1
   \end{cases}
\]
First we observe that, to prove the lemma, it suffices to find a $C^\infty$ function $b$ that is positive on $I$, zero on $\mathbb{R} \setminus I$, and with the property that for some positive constant $\beta$,
\begin{equation}\label{eq_aux}
\forall n \ \ || b^{(n)} || \le \beta \varepsilon^n L_n.
\end{equation}
Indeed, set $\alpha := \max \left\{1, \frac{8}{\varepsilon}\right\}$.  Then $L_n \le \alpha M_n$ for every $n$.  If we can manage to find a $C^\infty$ bump function $b$ with support $I$ satisfying \eqref{eq_aux}, then the function $\frac{1}{\beta \alpha} b$ will satisfy the conclusion of the lemma.

Let $E:= \mathbb{R} \setminus I$, which is a closed set of reals.  The assumption \eqref{eq_SumSmallDivergeFast} and definition of $\{ L_n \}$ ensures that
\[
\sum_{n=1}^\infty L_{n-1}/L_n < \infty.
\]
Hughes~\cite{MR0272965} demonstrates (with our $L_n$'s playing the role of his $M_n$'s) that, given any positive constant $\rho$, if we set
\[
\lambda_\rho:= \frac{L_0}{L_1} + \rho \ \underbrace{\sum_{n=2}^\infty \frac{L_{n-1}}{L_{n}}\left(\sum^\infty_{k=n} \frac{L_{k-1}}{L_k}\right)^{-\frac{1}{2}}}_{=:D}
\]
then there exists a function $f_\rho \in C^\infty(\mathbb{R})$ which is zero on $E$, and is positive on $E^\mathsf{c} (=I)$, such that $\forall n\in\mathbb{N}, ||f_\rho^{(n)}||_\infty\le 2(4\lambda_\rho)^n L_n$.  In particular, if we set
\[
\rho:=\frac{\varepsilon}{8D},
\]
then $\lambda_\rho=\frac{\varepsilon}{4}$.  Therefore, $||f_\rho^{(n)}||_\infty\le 2\varepsilon^n L_n$ for every $n \ge 0$, yielding the desired \eqref{eq_aux}.
\end{proof}

\begin{corollary}\label{cor_transition}
Suppose $I=(\ell,r)$, $\varepsilon > 0$, and $\{ M_k \}$ are as in the assumptions of Lemma \ref{lem_ImproveHughes}. Then there exists an infinitely differentiable function $s: [\ell,r] \to \mathbb{R}$ such that:
\begin{enumerate}
    \item $s$ is strictly increasing
    \item $s(\ell)=0$
    \item For all $k \ge 1$:  $s^{(k)}(\ell)=0=s^{(k)}(r)$ (where these are right and left derivatives, respecively)
    \item\label{item_s_norms} for all $k \ge 0$:  $\| s^{(k)} \| \le \varepsilon^k M_k$.
\end{enumerate}
\end{corollary}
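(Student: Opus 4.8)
The plan is to obtain $s$ as a suitably rescaled antiderivative of the flat bump function supplied by Lemma \ref{lem_ImproveHughes}. First I would apply Lemma \ref{lem_ImproveHughes} to $I$, $\varepsilon$, and $\{M_k\}$ (which are assumed to satisfy its hypotheses), obtaining a $C^\infty$ function $b$ with $b(x)>0$ for $x\in I$, $b(x)=0$ for $x\in\mathbb{R}\setminus I$, and $\|b^{(k)}\|\le\varepsilon^k M_k$ for all $k\ge 0$. I would then set $s(x):=c\int_\ell^x b(t)\,dt$ on $[\ell,r]$, where $c>0$ is a small constant to be pinned down below. Since $b$ is globally $C^\infty$, $s$ is $C^\infty$ on $[\ell,r]$ with $s(\ell)=0$ and $s^{(k)}=c\,b^{(k-1)}$ for every $k\ge 1$.

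Next I would verify conditions (1)--(3). For (1), given $\ell\le x_1<x_2\le r$, the open interval $(x_1,x_2)$ meets $I$ in a nonempty open set on which $b$ is strictly positive, so $s(x_2)-s(x_1)=c\int_{x_1}^{x_2}b>0$; hence $s$ is strictly increasing on $[\ell,r]$. Condition (2) is immediate. For (3), $b$ vanishes identically on each of the open sets $(-\infty,\ell)$ and $(r,\infty)$, so by continuity of every $b^{(k-1)}$ we get $b^{(k-1)}(\ell)=b^{(k-1)}(r)=0$, whence $s^{(k)}(\ell)=s^{(k)}(r)=0$ for all $k\ge 1$ (read as one-sided derivatives at the endpoints).

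It remains to choose $c$ so that \eqref{item_s_norms} holds. For $k\ge 1$ one has $\|s^{(k)}\|=c\|b^{(k-1)}\|\le c\,\varepsilon^{k-1}M_{k-1}$, and this is at most $\varepsilon^k M_k$ precisely when $c\,(M_{k-1}/M_k)\le\varepsilon$. Since $\sum_{k\ge 1}M_{k-1}/M_k$ converges, the quantity $R:=\sup_{k\ge 1}M_{k-1}/M_k$ is finite and positive, so any $c\le\varepsilon/R$ handles all $k\ge 1$ at once. For $k=0$, $|s(x)|\le c(r-\ell)\|b\|\le c(r-\ell)M_0\le M_0$ as soon as $c\le 1/(r-\ell)$. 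Thus $c:=\min\{\varepsilon/R,\ 1/(r-\ell)\}$ works, completing the construction.

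I do not anticipate a real obstacle: all the analytic weight sits in Lemma \ref{lem_ImproveHughes}, and the antiderivative trick is standard. The only points needing a little care are the index shift---the natural bound on $s^{(k)}$ involves $M_{k-1}$ rather than $M_k$, which is why one power of $\varepsilon$ is ``spent'' and the boundedness of the ratios $M_{k-1}/M_k$ is invoked---and the order-zero estimate, which forces the rescaling constant $c$ to also absorb the length $r-\ell$ of $I$.
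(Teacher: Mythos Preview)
Your argument is correct and follows essentially the same approach as the paper: take the antiderivative of the bump function from Lemma~\ref{lem_ImproveHughes} and rescale by a small positive constant to force the derivative bounds. The only cosmetic difference is in how the rescaling constant is selected---the paper observes that $M_{k-1}/M_k\to 0$ so that the desired inequality holds automatically for $k\ge N$ and then divides by the maximum of finitely many ratios $\|s_0^{(k)}\|/(\varepsilon^k M_k)$ for $k<N$, whereas you bound all $k\ge 1$ at once via $R=\sup_k M_{k-1}/M_k$; both are valid.
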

\begin{proof}
    Let $b$ be as in the conclusion of Lemma \ref{lem_ImproveHughes}, and define $s_0$ on $[\ell,r]$ by
    \[
    s_0(x):=\int_0^x b(t)dt.
    \]
    Since (on $[\ell,r]$) $s^{(k)} = b^{(k-1)}$ for all $k \ge 1$, the properties of $b$ ensure that $s_0$ has all of the required properties, except possibly requirement \eqref{item_s_norms}.  Note that the properties of $b$ ensure that
    \begin{equation}
        \forall k \ge 1 \ \| s_0^{(k)} \| = \| b^{(k-1)} \| \le \varepsilon^{k-1} M_{k-1}.
    \end{equation}
Assumption \eqref{eq_SumSmallDivergeFast} implies that the terms $M_{k-1}/M_k$ converge to zero, so there is a natural number $N$ such that 
\[
\forall k \ge N \ \ \frac{M_{k-1}}{M_k} < \varepsilon = \frac{\varepsilon^{k}}{\varepsilon^{k-1}}
\]
and hence
\[
\forall k \ge N \ \ \| s_0^{(k)} \| \le \varepsilon^{k-1} M_{k-1} < \varepsilon^k M_k.
\]

Set
\[
A:= \text{max} \left\{ \frac{\| s_0^{(k)}  \| }{\varepsilon^k M_k}   \ : \ 0 \le k < N  \right\}
\]
If $A \le 1$ we set $s:= s_0$, and if $A > 1$, set $s:= \frac{1}{A} s_0$.  Then $s$ has the required properties.

\end{proof}

We are now ready to prove Theorem \ref{thm_non_qa_sparse}.  For the rest of this section, assume $C_{\text{loc}} \{ M_k \}$ is not quasi-analytic.  Then by Lemma \ref{lem_LogConvexification}, $C \{ M_k \}$ is not quasi-analytic.  So by Rudin~\cite{MR0924157} we may WLOG assume that $\{M_k\}$ is log-convex. Then, by the Denjoy-Carleman Theorem, 
\[
\sum_{k=1}^\infty \frac{M_{k-1}}{M_k} < \infty.
\]
For each rational $\Delta > 0$ and each positive integer $i$, fix a function 
\[
s_{\Delta,i}: [0,\Delta] \to \mathbb{R}
\]
satisfying the conclusion of Corollary \ref{cor_transition}, with $I = [0,\Delta]$ and $\varepsilon = 1/i$.  Define
\[
y(\Delta,i):= s_{\Delta,i}(\Delta).
\]
So, $s_{\Delta,i}$ has the following properties:
\begin{enumerate}
    \item It maps $[0,\Delta]$ onto $[0,y(\Delta,i)]$ in a strictly increasing fashion;
    \item All derivatives of $s_{\Delta,i}$ at the endpoints of the interval $[0,\Delta]$ are zero;
    \item For all $k \ge 0$, $\| s_{\Delta,i}^{(k)} \| \le (1/i)^k M_k$.
\end{enumerate}

For each pair $p,q$ of rational numbers, each pair $\Gamma$, $\Delta$ of positive rationals, and each positive integer $i$, define
\[
t_{p,q,\Delta, \Gamma,i}: [p,p+\Delta] \to \mathbb{R}
\]
by
\[
t_{p,q,\Delta,\Gamma, i}(x):=q + \frac{\Gamma}{y(\Delta,i)} s_{\Delta,i}(x-p)
\]
Notice that $t_{p,q,\Delta,\Gamma,i}$ maps the interval $[p,p+\Delta]$ onto $[q, q + \Gamma]$, and its derivatives at the endpoints are all zero.  

Let $\mathcal{T}$ denote the set of all such $t_{p,q,\Delta,\Gamma,i}$ (for $p,q,\Delta,\Gamma \in \mathbb{Q}$ and $i \in \mathbb{N}$) that have the property
\begin{equation}\label{eq_KeyRatio}
    \frac{\Gamma}{y(\Delta,i)} \le 1.
\end{equation}
Notice that $\mathcal{T}$ is a countable set of functions, and the inequality \eqref{eq_KeyRatio} ensures that 
\begin{equation}
   t_{p,q,\Delta,\Gamma,i} \in \mathcal{T} \ \implies \ \  \forall k \ge 1 \ \| t^{(k)}_{p,q,\Delta,\Gamma,i} \| \le \| s_{\Delta,i}^{(k)} \| \le (1/i)^k M_k. 
\end{equation}

Fix a point $P=(x_P,y_P)$ in the real plane; we need to define the function $h_P$ that will be part of our sparse $C_{\text{loc}} \{ M_k \}$ system.  

Fix any increasing sequence $\{ p_i \}$ of rational numbers converging to $x_P$, and set $\Delta_i:= p_{i+1}-p_i$.  Recursively define an increasing sequence $\{ q_i \}$ of rational numbers converging to $y_P$ such that
\begin{equation}
    \forall i \ \  y(\Delta_i,i) >  y_P-q_i  > 0.
\end{equation}
Let $\Gamma_i:=q_{i+1}-q_i$.  It follows that 
\begin{equation}
    \forall i \ \ \Gamma_i = q_{i+1}-q_i < y_P-q_i  < y(\Delta_i,i) 
\end{equation}
and hence $\displaystyle\frac{\Gamma_i}{y(\Delta_i,i)} \le 1$ for all $i$.  So 
\begin{equation}\label{eq_ith_piece}
\forall i \ge 0 \left(  t_{p_i,q_i,\Delta_i,\Gamma_i,i} \in \mathcal{T}  \text{ and } \forall k \ge 1 \left( \ \| t^{(k)}_{p_i,q_i,\Delta_i,\Gamma_i,i} \| \le (1/i)^k M_k \right)  \right).
\end{equation}
Notice that for each $i$, $t_{p_i,q_i,\Delta_i,\Gamma_i,i}$ meets $t_{p_{i+1},q_{i+1},\Delta_{i+1},\Gamma_{i+1},{i+1}}$ at the point $(p_{i+1},q_{i+1})$, and both have all vanishing derivatives at $p_{i+1}$.  So their union is a strictly increasing $C^\infty$ function mapping $[p_i,p_{i+2}]$ onto $[q_i,q_{i+2}]$.  Furthermore, since $(p_i,q_i)$ converges to $P=(x_P,y_P)$, the function $h: [p_0,x_P] \to [q_0,y_P]$ defined (as a set of ordered pairs) by 
\[
h:= \big\{ (x_P,y_P) \big\} \cup \bigcup_{i \in \mathbb{N}} t_{p_i,q_i,\Delta_i,\Gamma_i,i}
\]
is a strictly increasing function that is continuous on $[p_0,x_P]$ and infinitely differentiable on $[p_0,x_P)$.  To see that it is infinitely differentiable at $x_P$ too, fix a $k \ge 1$.  By \eqref{eq_ith_piece}, we have
\[
\lim_{i \to \infty} \| \underbrace{t^{(k)}_{p_i,q_i,\Delta_i,\Gamma_i,i}}_{h^{(k)} \restriction [p_i,p_{i+1}]} \| =0,
\]
so $\lim_{z \nearrow x_P} h^{(k)}(z)  = 0$.  By Lemma 6 of Bajpai-Velleman~\cite{MR3552748}, the $k$-th left derivative of $h$ at $x_P$ exists and is zero.  

Now \eqref{eq_ith_piece} also ensures that 
\begin{equation}
    \forall k \ge 1 \  \| h^{(k)} \| \le M_k, 
\end{equation}
and so by Lemma \ref{lem_PastSomePointSuffices}, $h \in C_{[p_0,x_P]} \{ M_k \}$.

This completes the heart of the construction, but the function we've constructed has domain $[p_0,x_P]$, not $\mathbb{R}$.  But still using members of $\mathcal{T}$, we can easily extend $h$ to a strictly increasing $h_L: (-\infty,x_P] \to (-\infty,y_P]$ such that for all $k \ge 1$, $\| h_L^{(k)} \| \le M_k$, as follows.  For $n \in \mathbb{N}$ set $u_n:= p_0- n$, $v_n := q_0 - n \cdot y(1,1)$, and use the function $t_{u_{n+1}, v_{n+1}, 1, y(1,1),1 }$ to map $[u_{n+1}, u_n]$ onto $[v_{n+1},v_n]$.  Combining with the earlier function on $[p_0,x_P]$, this yields a function mapping $(-\infty,x_P]$ onto $(-\infty,y_P]$ with the desired properties.  And what we've done so far can obviously be done ``from the right" of the point $P=(x_P,y_P)$ as well, yielding a function with similar properties on $[x_P,\infty)$.  Thus we obtain an increasing bijection $h_P: \mathbb{R} \to \mathbb{R}$ passing through the point $P=(x_P,y_P)$, such that 
\[
\forall k \ge 1 \ \| h_P^{(k)} \| \le M_k.
\]
In particular, $h_P \in C_{\text{loc}} \{ M_k \}$.   

Finally, we verify the sparseness of the system $\big\langle h_P \ : \ P = (x_P,y_P) \in \mathbb{R}^2 \big\rangle$.  Fix any $u \in \mathbb{R}$, and consider the collection of all points $P$ such that $u \ne x_P$.  By construction, for any such $P$, $h_P(u) = t(u)$ for some $t$ in the countable set $\mathcal{T}$; so 
\[
\left\{ h_P(u) \ : \ P \in \mathbb{R}^2 \text{ and } u \ne x_P \right\} \subseteq \left\{\vphantom{t^{-1}(u)} t(u) \ : \ t \in \mathcal{T} \right\}
\]
and the right side is countable, since $\mathcal{T}$ is countable.  Similarly, 
\[
\left\{ h^{-1}_P(u) \ : \ P \in \mathbb{R}^2 \text{ and } u \ne y_P \right\} \subseteq \left\{ t^{-1}(u) \ : \ t \in \mathcal{T} \right\}
\]
is countable.

\section{CH and sparse systems for all locally Denjoy-Carleman classes}

We prove Theorem \ref{thm_CH}.  Equivalence of \eqref{item_CH} with \eqref{item_SparseAnalytic} was proved in \cite{CodyCoxLee}.   \eqref{item_EveryDenjoy-Carleman} trivially implies \eqref{item_AtLeastOneDenjoy-Carleman}, and \eqref{item_SparseAnalytic} trivially implies \eqref{item_AtLeastOneDenjoy-Carleman}.  It remains to prove that \eqref{item_AtLeastOneDenjoy-Carleman} implies \eqref{item_CH} and that \eqref{item_CH} implies \eqref{item_EveryDenjoy-Carleman}.

To see that \eqref{item_AtLeastOneDenjoy-Carleman} implies \eqref{item_CH}, suppose $\left\{ f_P \ : \ P \in \mathbb{R}^2  \right\}$ is a sparse $C_{\text{loc}} \{ M_k \}$ system, where $C_{\text{loc}} \{ M_k \}$ is quasi-analytic.  Then 
\[
\left\{ f_{(0,y)} \ : \ y \in \mathbb{R} \right\}
\]
is an uncountable family of (restrictions of) functions from $C_{\text{loc}} \{ M_k \}$, all with domain $(-\infty,0)$, which by sparseness of the original system has what Erd\H{o}s~\cite{MR168482} calls ``property $P_0$".  The quasi-analyticity ensures that if $f$ and $g$ are distinct members of $C_{\text{loc}} \{ M_k \}$, then $\left\{ x \in \mathbb{R} \ : \ f(x) = g(x) \right\}$ is discrete, hence countable.  The failure of CH then follows by exactly the same arguments as Erd\H{o}s~\cite{MR168482}.

Now we prove \eqref{item_CH} implies \eqref{item_EveryDenjoy-Carleman}. We first prove the following ZFC theorem, which strengthens Theorem 5 of \cite{CodyCoxLee}:

\begin{theorem}\label{thm_Interpolate}
Suppose $\mathcal{E}$ is a partition of $\mathbb{R}$ into dense subsets of $\mathbb{R}$; for each $z \in \mathbb{R}$, let $E_z$ denote the unique member of $\mathcal{E}$ containing $z$.

Then for any $P=(x_P,y_P) \in \mathbb{R}^2$, any countable set $W$ of reals, and any sequence $\{ N_k \}$ of positive reals, there is an entire $f: \mathbb{C} \to \mathbb{C}$ such that:
\begin{enumerate}
    \item $f \restriction \mathbb{R}$ is an increasing bijection from the reals to the reals with strictly positive derivative;
    \item $f(x_P)=y_P$;
    \item for each $w \in W$:
    \begin{itemize}
        \item if 
         $w \ne x_P$ then $f(w) \in E_w$;
        \item if $w \ne y_P$ then $f^{-1}(w) \in E_w$; and
    \end{itemize}
    \item\label{item_NewClause} $f\restriction \mathbb{R}  \in C_{\text{loc}} \{ N_k \}$.
\end{enumerate}
\end{theorem}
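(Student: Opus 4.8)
The plan is to construct $f$ as a locally uniform limit $f=\lim_n f_n$ of polynomials (this is essentially the construction of \cite[Theorem~5]{CodyCoxLee}, which already handles (1)--(3)), chosen carefully enough to also secure (4). First a reduction: since $C_{\text{loc}}\{N'_k\}\subseteq C_{\text{loc}}\{N_k\}$ whenever $N'_k\le N_k$ for all $k$, I may replace $\{N_k\}$ by $\nu_k:=\min\{N_0,\dots,N_k\}$ and assume henceforth that $\{N_k\}$ is non-increasing. Enumerate in one $\omega$-sequence all ``tasks'': for $w\in W$ with $w\ne x_P$ the task ``$f(w)\in E_w$'', and for $w\in W$ with $w\ne y_P$ the task ``$f^{-1}(w)\in E_w$''. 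Build $f_0(z):=z+(y_P-x_P)$ and $f_{n+1}:=f_n+\varepsilon_n g_n$, where $\varepsilon_n>0$ is real and $g_n$ a polynomial, while maintaining a finite increasing chain of ``frozen'' sets $x_P\in R_0\subseteq R_1\subseteq\cdots\subseteq\mathbb R$ with each $g_m$ vanishing on $R_m$; then $f_m$ agrees with $f_n$ on $R_n$ for $m\ge n$, so every value or preimage-value anchored to a point of $R_n$ survives into the limit.

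At stage $n$, let $a$ be the point relevant to task $n$ ($a=w$ for an ``$f(w)$''-task, $a=f_n^{-1}(w)$ for an ``$f^{-1}(w)$''-task, well-defined since the $f_n$ will be strictly increasing). If $a\in R_n$, a short case analysis shows task $n$ is already met --- this is the one place I use that $\mathcal E$ is a \emph{partition}: when the relevant point is already frozen, the membership demand collapses to the triviality that an element lies in its own class, since $x\in E_y\iff E_x=E_y$ --- and I take $g_n$ to be any monic polynomial of odd degree vanishing on $R_n$ with $\varepsilon_n$ tiny, and set $R_{n+1}:=R_n$. Otherwise I take $g_n$ monic of odd degree with zero set $R_n$ together with one or two extra real points chosen so that the degree is odd \emph{and} $g_n(a)$ has whichever sign makes a small $\varepsilon_n>0$ push $f_{n+1}(a)$ (``$f(w)$''-task) or $f_{n+1}^{-1}(w)$ (``$f^{-1}(w)$''-task, via the implicit function theorem, using $g_n(a)\ne 0$) into the relevant dense target; that target being dense, some $0<\varepsilon_n\le\delta_n$ works for any prescribed threshold $\delta_n>0$, and I set $R_{n+1}:=R_n\cup\{a'\}$ with $a'$ the newly pinned point ($a'=w$, or $a'=f_{n+1}^{-1}(w)$ chosen in $E_w\setminus R_n$).

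The threshold $\delta_n$, chosen after $g_n$ is known, is taken below finitely many quantities so that, summing over $n$: (i) $\sum_n\varepsilon_n g_n$ and $\sum_n\varepsilon_n g_n'$ converge locally uniformly on $\mathbb C$, making $f$ entire and real on $\mathbb R$ with $f'=1+\sum_n\varepsilon_n g_n'$; (ii) $\sum_n\varepsilon_n\max\{0,-\inf_{\mathbb R}g_n'\}<\tfrac12$ --- the reason for taking each $g_n$ \emph{monic of odd degree} is that then $g_n'$ has even degree and positive leading coefficient, hence is bounded below on $\mathbb R$, so $f'\ge\tfrac12$ everywhere, giving (1), with surjectivity automatic since $f'\ge\tfrac12$ forces $f(x)\to\pm\infty$ as $x\to\pm\infty$; and (iii) the Denjoy--Carleman bound, where the key point is that each $g_n$ is a \emph{polynomial}, so $g_n^{(k)}\equiv 0$ once $k>\deg g_n$, whence for $k\ge 2$ one has $f^{(k)}=\sum_{n:\deg g_n\ge k}\varepsilon_n g_n^{(k)}$; taking $\delta_n\le 2^{-n}N_{\deg g_n}\big/\bigl(1+\max_{j\le\deg g_n}\|g_n^{(j)}\|_{[-n,n]}\bigr)$ and using that $\{N_k\}$ is non-increasing (so $N_{\deg g_n}\le N_k$ when $\deg g_n\ge k$), the tail $\sum_{n>R}\varepsilon_n\|g_n^{(k)}\|_{[-R,R]}$ is at most $N_k$, while on $[-R,R]$ the remaining finitely many terms vanish once $k$ exceeds a bound $D_R$, so $\|f^{(k)}\|_{[-R,R]}\le\beta_R N_k$ for all $k$ after absorbing the finitely many small-$k$ exceptions into a constant $\beta_R$ --- that is, $f\restriction\mathbb R\in C_{\text{loc}}\{N_k\}$ (Lemma~\ref{lem_PastSomePointSuffices} even lets one ignore the small-$k$ terms). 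Finally (2) holds since $x_P\in R_0$ is frozen at stage $0$, and (3) holds since each task is met at its stage and never disturbed afterward.

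The step I expect to be the main obstacle is forcing the perturbations to meet three competing demands at once: they must be \emph{unbounded} on $\mathbb R$ (a bounded non-constant entire correction would be of exponential type, hence have derivatives on $\mathbb R$ too large to lie in $C_{\text{loc}}\{N_k\}$ for rapidly decaying $\{N_k\}$, so surjectivity cannot come from a bounded term), yet \emph{sign-controlled at infinity} so $f'$ stays positive, yet with coefficients small enough that the Denjoy--Carleman estimate closes for an \emph{arbitrary} $\{N_k\}$; the choice to use polynomials that are monic of odd degree is exactly what reconciles these, since a polynomial contributes to only finitely many of the norms $\|f^{(k)}\|_{[-R,R]}$ and a monic odd-degree polynomial has derivative bounded below. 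A secondary point requiring care --- handled as in \cite{CodyCoxLee} --- is the bookkeeping for the ``$f^{-1}(w)\in E_w$'' tasks and checking that apparent collisions between a task's relevant point and an already-frozen point are never genuine, which again rests on $\mathcal E$ being a partition.
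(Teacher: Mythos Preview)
Your argument is correct, and it shares with the paper the overall strategy of realizing $f$ as a locally uniform limit of polynomials built along the lines of \cite[Theorem~5]{CodyCoxLee}. The implementations of clause~\eqref{item_NewClause} differ, however. The paper treats the \cite{CodyCoxLee} recursion essentially as a black box, appending a single new inductive hypothesis $(\text{VIII})_n$ that tracks per-disk constants $B_i$ and demands $\lVert f_j^{(k)}\restriction D_i\rVert<B_i^k N_k$ for all $i\le j\le n$; this is maintained simply by shrinking the perturbation coefficient $\alpha_n$ a bit further at stage $n$, after which $B_n$ is chosen \emph{a posteriori} large enough to accommodate $f_n$ on $D_n$. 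Your route instead first reduces to non-increasing $\{N_k\}$ and then exploits that each polynomial $g_n$ contributes to only finitely many derivatives: the choice $\varepsilon_n\le 2^{-n}N_{\deg g_n}/\bigl(1+\max_{j}\lVert g_n^{(j)}\rVert_{[-n,n]}\bigr)$ together with monotonicity of $\{N_k\}$ forces $\lVert f^{(k)}\rVert_{[-R,R]}\le N_k$ for every $k$ beyond a threshold depending only on $R$---effectively achieving $B_i=1$ for all $i$. The paper's approach is more modular (it touches the \cite{CodyCoxLee} machinery only through the size of $\alpha_n$ and needs no structural change to $h_n$), while yours is more self-contained (you rebuild the frozen-set bookkeeping and introduce the monic odd-degree device to keep $f'$ bounded below) and yields the slightly sharper bound; both finish via Lemma~\ref{lem_PastSomePointSuffices}.
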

\begin{proof}
We slightly modify the proof of Theorem 5 from \cite{CodyCoxLee}.  In that proof, the desired $f$ was the limit of a recursively-constructed sequence $\{ f_n \}$ of polynomials, where $f_0(z) =\frac{3}{2}(z-x_P) + y_P$, and for each $n \ge 1$, $f_n$ was of the form
\[
f_n = f_{n-1} + \alpha_n M_n h_n
\]
for some (recursively-defined) polynomial $h_n$, some positive real $\alpha_n$, and some $M_n \in [0,1]$, defined in that order.  The proof maintains a list of 7 inductive clauses, labelled (I)$_n$ through (VII)$_n$, which ensure that the sequence $\{ f_n \}$ was uniformly Cauchy on all compact $D \subset \mathbb{C}$, and that the limit\footnote{Which is uniform on every compact $D \subset \mathbb{C}$.} of the $f_n$'s satisfies all the properties listed in Theorem \ref{thm_Interpolate}, except possibly our new clause \ref{item_NewClause}.  

We will modify the recursion from \cite{CodyCoxLee} ever so slightly; essentially at stage $n$ of the recursion, we define $h_n$ exactly as in \cite{CodyCoxLee}, but then shrink $\alpha_n$ even further; this has no effect on the maintenance of the inductive clauses (I)$_n$ through (VII)$_n$.  We also recursively construct another auxilliary sequence $\{ B_n \}$ of (typically large) positive real numbers, and add another inductive clause to the seven already listed in \cite{CodyCoxLee}:
\[
\text{(VIII)}_{\textcolor{red}{n}}: \ \ \ \forall i \ \forall j \ \ \left(  i \le j \le \textcolor{red}{n} \ \implies \  \forall k \ge 1 \ \    \left\lVert   f_j^{(k)} \restriction D_i \right\rVert  < B_i^k N_k \right)
\]
\noindent Here, $D_i$ denotes the closed disc in $\mathbb{C}$ of radius $i$ centered at the origin.  Note that since each $f_j$ is a polynomial, the inequality $\left\lVert   f_j^{(k)} \restriction D_i \right\rVert  < B_i^k N_k$ trivially holds for $k \ge \text{degree}(f_j)$, so the content of (VIII)$_n$ is really about a finite collection of values of $k$.

Suppose $\alpha_{n-1}$, $M_{n-1}$, $h_{n-1}$, and $B_{n-1}$ have been defined, and that (I)$_{n-1}$ through (VII)$_{n-1}$ hold, along with our new inductive clause
\[
\text{(VIII)}_{\textcolor{red}{n-1}}: \ \ \ \forall i \ \forall j \ \ \left(  i \le j \le \textcolor{red}{n-1} \ \implies \  \forall k \ge 1 \ \    \left\lVert   f_j^{(k)} \restriction D_i \right\rVert  < B_i^k N_k \right).
\]

The inductive step of the proof is almost verbatim from \cite{CodyCoxLee}, with the following alterations.  First, the polynomial $h_n$ is defined exactly as in \cite{CodyCoxLee} (top of page 5).  They then argue that for all sufficiently small choices of $\alpha_n > 0$, there is an $M_n \in [0,1]$ such that the polynomial
\[
f_n := f_{n-1} + \alpha_n M_n h_n
\]
satisfies their inductive clauses (I)$_n$ through (VII)$_n$.  Using assumption (VIII)$_{n-1}$, we simply shrink $\alpha_n$ further, if necessary, to ensure that the following finitely many additional constraints are met:
\begin{equation}\label{eq_before_defining_M_n}
    \forall i \le n -1  \ \ \forall k \le \text{degree} \big(  f_{n-1} + \alpha_n h_n \big) : \  \big\lVert f_{n-1} \restriction D_i \big\rVert + \big\lVert \alpha_n  h_n \restriction D_i  \big\rVert < B_i^k N_k.
\end{equation}

Then define $M_n \in [0,1]$ (based on choice of $\alpha_n$) exactly as in \cite{CodyCoxLee}.  Then by \eqref{eq_before_defining_M_n} and the fact that $M_n \in [0,1]$, it follows that
\begin{align*}
    \forall i \le n -1  \ \ \forall k \le \text{degree} \left(  \overbrace{f_{n-1} + \alpha_n M_n h_n}^{f_n:=} \right) : \
    \begin{split}\big\lVert f_n \restriction D_i \big\rVert &\le  \big\lVert f_{n-1} \restriction D_i \big\rVert + \big\lVert \alpha_n M_n h_n \restriction D_i  \big\rVert\\ &< B_i^k N_k.\end{split}
\end{align*}

Hence, $f_n$ satisfies the relevant requirements for all $i < n$.  To satisfy the requirement at $n$, simply define $B_n$ to be any strict upper bound of the set
\begin{equation}
\Big\{ \big\lVert f_n^{(k)} \restriction D_n \big\rVert \ : \ k \le \text{degree}(f_n)  \Big\}.
\end{equation}

Thus, we have achieved (VIII)$_n$.

Since the sequence of $f_n$'s satisfy clauses (I)$_n$ through (VII)$_n$ of \cite{CodyCoxLee}, by that theorem their limit $f$ satisfies all clauses of Theorem \ref{thm_Interpolate} listed before clause \ref{item_NewClause}.  And, the fact that our additional clause (VIII)$_n$ holds for all $n$ ensures that
\begin{equation}
\forall i \in \mathbb{N} \ \forall k \ge 1 \ \big\lVert f^{(k)} \restriction D_i  \big\rVert \le B_i^k N_k
\end{equation}
This implies that $f \restriction D_i \in C_{D_i} \{ N_k \}$ for all $i$, and hence $f \in C_{\text{loc}} \{ N_k \}$.

\end{proof}

Now let $\{ N_k \}$ be any sequence of positive reals.  The existence of a sparse $C_{\text{loc}} \{ N_k \}$ system under CH follows, \emph{mutatis mutandis}, as in the proof of Theorem 2 of \cite{CodyCoxLee} (with ``analytic" replaced by ``$C_{\text{loc}} \{ N_k \}$", and Theorem 5 of \cite{CodyCoxLee} replaced by our Theorem \ref{thm_Interpolate}). 
\section{Concluding remarks and open questions}

We summarize the current knowledge of sparse systems, and present some open questions.  We know:
\begin{itemize}[leftmargin=11mm, itemindent=0pt, labelwidth=4mm, rightmargin=7mm]
  \item Classes of real polynomials \textbf{never} carry sparse systems (Lemma \ref{lem_polynomials}).  
  
  \item Non-quasi-analytic locally Denjoy-Carleman classes \textbf{always} carry sparse systems (Theorem \ref{thm_non_qa_sparse}).
    \begin{itemize}[leftmargin=3mm]
  \item Assuming CH, \textbf{all} locally Denjoy-Carleman classes carry sparse systems (Theorem \ref{thm_CH});

  \item Assuming $\neg \text{CH}$, the locally Denjoy-Carleman classes that carry sparse systems are exactly the non-quasi-analytic ones (follows from Theorems \ref{thm_CH} and \ref{thm_non_qa_sparse}).
    \end{itemize}
\end{itemize}

This completely answers Question \ref{q_MainQuestion}, and establishes some clear dividing lines between classes that carry sparse systems and those that don't:
\begin{itemize}[rightmargin=7mm]
  \item Under CH, there is a divide between classes of polynomials and locally Denjoy-Carleman classes; all of the latter carry sparse systems, while none of the former do.  
  
  \item If CH fails, there is a divide between quasi-analytic locally Denjoy-Carleman classes and the non-quasi-analytic ones; all of the latter carry sparse systems, while none of the former do.   

\end{itemize}

What about other intermediate classes?  Regarding the CH setting, there is no obvious natural candidate for classes between polynomials and locally Denjoy-Carleman.  If we were to only require the sequence $\{ M_k \}$ to be non-negative in the definition of the locally Denjoy-Carleman class $C_{\text{loc}} \{ M_k \}$, then if $M_k = 0$ for at least one $k$, $C_{\text{loc}} \{ M_k \}$ would be contained in the polynomials and hence, by Lemma \ref{lem_polynomials}, could not carry a sparse system.

\begin{bibdiv}
\begin{biblist}
\bibselect{Bibliography}
\end{biblist}
\end{bibdiv}

\end{document}